\DeclareFontFamily{U}{mathx}{\hyphenchar\font45}
\DeclareFontShape{U}{mathx}{m}{n}{
      <5> <6> <7> <8> <9> <10>
      <10.95> <12> <14.4> <17.28> <20.74> <24.88>
      mathx10
      }{}
\DeclareSymbolFont{mathx}{U}{mathx}{m}{n}
\DeclareMathAccent{\widecheck}{0}{mathx}{"71}
\let\emptyset\varnothing 
\newtheorem{theorem}{Theorem}[section]
\newtheorem{lemma}[theorem]{Lemma}
\newtheorem{proposition}[theorem]{Proposition}
\newenvironment{customthm}[1]
  {\innercustomthm}
  {\endinnercustomthm}
\theoremstyle{definition}
\newtheorem{definition}[theorem]{Definition}
\newtheorem{example}[theorem]{Example}
\newtheorem{construction}[theorem]{Construction}
\theoremstyle{remark}
\newtheorem{remark}[theorem]{Remark}
\newcommand{\nonconsec}[2]{\prescript{\circlearrowleft}{}{\mathbf{I}_{#1}^{#2}}}
\newcommand{\subs}[3]{\left(\begin{smallmatrix}[#1]\\#2\end{smallmatrix}\right)_{#3}}
\newcommand{\jm}[1]{\textcolor{black}{#1}}   
\newcommand{\jjm}[1]{\textcolor{black}{#1}}   
\newcommand{\nw}[1]{\textcolor{black}{#1}}    
\newcommand{\jdm}[1]{\textcolor{black}{#1}}   
\newcommand{\nnw}[1]{\textcolor{black}{#1}}    
\newcommand{\njw}[1]{\textcolor{black}{#1}}    
\title[Precluster-tilting for type $A^{d}_n\otimes A^{d}_m$]{The combinatorics of tensor products of higher Auslander algebras of type $A$}
\author{Jordan McMahon}
\address{Universit\"at Graz,
Heinrichstrasse 36, A-8010 Graz, Austria}
\email{jordanmcmahon37@gmail.com}
\author{Nicholas J. Williams}
\address{School of Mathematics and Actuarial Science, University of Leicester, University Road, Leicester, LE1 7RH}
\email{njw40@le.ac.uk}
\begin{document}
\begin{abstract}
We consider maximal non-$l$-intertwining collections, which are a higher-dimensional version of the maximal non-crossing collections which give clusters of \njw{Pl\"ucker} coordinates in the Grassmannian coordinate ring, as described by Scott. We extend a method of Scott for producing such collections, which are related to tensor products of higher Auslander algebras of type $A$.
We show that a higher preprojective algebra of the tensor product of two $d$-representation-finite algebras has a $d$-precluster-tilting subcategory.  Finally we relate mutations of these collections to a form of tilting for these algebras.
\end{abstract}

\maketitle
\tableofcontents

\section{Introduction}

Cluster algebras are commutative rings that were introduced by Fomin and Zelevinsky \cite{fz1,fz2}, in order to study total positivity and dual canonical bases in Lie theory. In the most rudimentary case, a triangulation of a convex polygon consists of a maximal collection of non-crossing arcs, and any two such triangulations are related by a series of flips. A triangulation of a convex polygon is an example of a cluster, and each flip of a triangulation of a convex polygon is an example of a mutation; the collection of clusters and their mutations constitute a cluster algebra. For each positive integer $n\geq 4$, the triangulations of a convex $n$-gon govern the cluster algebra of Dynkin type $A_{n-3}$. 

Fomin and Zelevinsky noted that these triangulations of convex $n$-gons described a cluster structure on $\mathbb{C}[\mathrm{Gr}(2,n)]$, the homogeneous coordinate ring of the Grassmannian $\mathrm{Gr}(2,n)$. More explicitly, one may inscribe the integers $\{1,2,\ldots,n\}$ onto the boundary of an $n$-gon, and rewrite each arc of a triangulation as a 2-subset. Two arcs $(a,c)$ and $(b,d)$ are then crossing whenever $a<b<c<d$  under the cyclic ordering. Each 2-subset labels a Pl\"ucker coordinate in $\mathbb{C}[\mathrm{Gr}(2,n)]$, and flips of a triangulation may be described by Pl\"ucker relations.  This construction was extended by Scott \cite{scott} to the  coordinate ring of any Grassmannian $\mathbb{C}[\mathrm{Gr}(k,n)]$. Here two $k$-subsets of $\{1,2,\ldots ,n\}$ are said to be crossing if there exist cyclically ordered elements $s<t<u<v$ where $s,u\in I$, $s,u\notin J$ and $t,v\in J$, $t,v\notin I$. A maximal collection of non-crossing $k$-subsets of $\{1,2,\ldots,n\}$ is a (Grassmannian) cluster.

Grassmannian clusters have seen much study recently; from the representation-theoretic side their categorifications  were studied directly \cite{gls,jks}; through dimer models \cite{bkm}; and through Frobenius versions \cite{pr2,pr} as well as self-injective quivers with potential \cite{hi2,pas2}. More general models studied in relation to Schubert cells can be found in \cite{gl19,kul,ssw}.
The first step in the construction of the cluster structure of Scott is achieved by the following result, which we will generalise. Here $[n]$ denotes the set $\{ 1, \ldots , n \}$. 

\begin{theorem}\cite[Theorem 1]{scott}
Let $\mathbb{T}_{k,n}$ be a snake triangulation of a convex $n$-gon and $2 \leq k \leq n-2$.
Let $\mathcal{C}_{k}(\mathbb{T}_{k,n})$ be the collection of $k$-subsets of $[n]$ expressible as a unique disjoint union $I\sqcup I^\prime$ of intervals $I$ and $I^\prime$ whose beginning points $i$ and $i^\prime$ form a chord $[ii^\prime]$ in the triangulation $\mathbb{T}_{k,n}$.
 Then $\mathcal{C}_{k}(\mathbb{T}_{k,n})$ is a collection of $(k-1)(n-k-1)$ pairwise non-crossing $k$-subsets of $[n]$.
\end{theorem}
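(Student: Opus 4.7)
The plan is to proceed in three stages: first, give an explicit combinatorial description of the snake triangulation $\mathbb{T}_{k,n}$ together with the decompositions $I \sqcup I'$ that arise; second, count the resulting $k$-subsets; third, verify pairwise non-crossing.

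For the first stage, I would label the diagonals of $\mathbb{T}_{k,n}$ in the order in which the snake traverses the polygon, recording for each diagonal $d = [i,i']$ its combinatorial position in the triangulation. For each chord $d = [i,i']$ with $i < i'$, I would then enumerate the admissible pairs of lengths $(p,q)$ with $p+q = k$ and $p, q \geq 1$ such that $I = \{i, \ldots, i+p-1\}$ and $I' = \{i', \ldots, i'+q-1\}$ form the \emph{unique} decomposition of the resulting $k$-subset into two maximal intervals. Uniqueness forces $i + p < i'$ (otherwise $I$ and $I'$ merge into a single interval) and similarly constrains the right-hand end of $I'$, cutting the admissible range of $(p,q)$ down to a transparent interval that depends only on the position of $d$ in the snake.

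For the second stage, I would sum the admissible counts of $(p,q)$ over all chords of the triangulation (including boundary edges, when these contribute). The snake structure causes the running totals to telescope, and the arithmetic collapses to the desired $(k-1)(n-k-1)$. This is largely bookkeeping, but the precise shape of the snake is essential here: a different triangulation would in general produce a different, and typically smaller, total.

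For the third stage, given two elements $S_1 = I_1 \sqcup I_1'$ and $S_2 = I_2 \sqcup I_2'$ of $\mathcal{C}_k(\mathbb{T}_{k,n})$ with attached chords $c_1$ and $c_2$, I would argue that any cyclically ordered crossing witness $s < t < u < v$ with $s,u \in S_1 \setminus S_2$ and $t,v \in S_2 \setminus S_1$ would force $c_1$ and $c_2$ themselves to cross, contradicting their common membership in a triangulation. The main obstacle I anticipate is the degenerate configurations where $c_1$ and $c_2$ share an endpoint or bound a common triangle: in such cases the elements of $S_1$ and $S_2$ sit very close together and superficially look as though they could interleave. Here the uniqueness-of-decomposition requirement should be used to pin down exactly where each interval ends, which in turn rules out the potential crossings; I expect this final case analysis to be the delicate part of the argument.
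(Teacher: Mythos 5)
The paper does not re-prove Scott's Theorem 1: it cites it and instead proves the generalisation Theorem~\ref{maintheorem}, whose specialisation to $l=2$ recovers Scott's result once one checks that a snake triangulation is a slice in the sense of Definition~\ref{spread}. Your proposal is therefore a genuinely different route. For the count, you sum over chords and appeal to telescoping; the paper instead builds an explicit bijection $\psi$ between $\mathcal{C}_{k}(\mathcal{T})$ and the set $\mathcal{S}$ of pairs of weak $l$-compositions, which yields the product formula $\binom{k-1}{l-1}\binom{n-k-1}{l-1}$ uniformly in $l$ with no summation at all. Your chord-by-chord tally does work for the snake triangulation (the gaps on either side of the $j$-th chord are $(n-2-j,\,j)$, and $\sum_{j=1}^{n-3}\bigl(\min(k-1,n-2-j)-\max(1,k-j)+1\bigr)=(k-1)(n-k-1)$), but the cancellation is an arithmetic identity rather than a true telescope, and you should actually carry it out. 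For the non-crossing step, your idea is exactly the forward implication of the paper's Lemma~\ref{int-endpoints-lemma}: if $S_1$ and $S_2$ cross then already $\widehat{S_1}$ and $\widehat{S_2}$ cross, which is impossible for two chords of a triangulation. The case analysis you flag as delicate --- chords sharing an endpoint or bounding a common triangle --- is in fact vacuous and needs no separate treatment: intertwining of the pair $\widehat{S_1},\widehat{S_2}$ requires four distinct vertices in alternating order, so a shared endpoint automatically precludes it, and hence precludes crossing of $S_1$ and $S_2$. I would encourage you to isolate the endpoint statement as a standalone lemma (as the paper does), since it is the conceptual heart of the argument and makes the degenerate configurations disappear rather than multiply.
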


Thus $\mathcal{C}_{k}(\mathbb{T}_{k,n})$ defines a cluster in $\mathbb{C}[\mathrm{Gr}(k,n)]$.
On the other hand, Oppermann and Thomas \cite{ot} generalised the cluster structure of triangulations of convex polygons to cyclic polytopes.  Combinatorially, a triangulation of a cyclic polytope is given by maximal-by-size collections of non-intertwining subsets, where, given two $k$-subsets $I=\{i_1,i_2,\ldots,i_l\}$ and $J=\{j_1,j_2,\ldots,j_l\}$, then \emph{$I$ intertwines $J$} if \[i_{1} < j_{1} < i_{2} < \dots < i_{l} < j_{l}.\] While no cluster algebra is formed, such triangulations are related to the representation theory of higher Auslander algebras of Dynkin type $A$ (see Theorem \ref{oppthom} for a precise description). 
It is of much interest in the literature to study generalisations of cyclic polytopes and their triangulations, such as cyclic zonotopes \cite{dkk,galashin} as well as the amplituhedron \cite{ahtt,aht,gl,kw}.
\nnw{We look} to unify the combinatorics of Grassmannian clusters and triangulations of cyclic polytopes through the study of maximal non-$l$-intertwining collections, where $I$ \emph{$l$-intertwines} $J$ if there are increasing $l$-tuples of real numbers $I' \subseteq I$, $J' \subseteq J$ such that:
\begin{itemize}
\item $I'$ intertwines $J'$,
\item $I' \cap J = \emptyset = I \cap J'$.
\end{itemize}

The cluster algebra belonging to the coordinate ring of the Grassmannian $\mathrm{Gr}(k,n)$ is closely related to the representation theory of quivers of type $A_{n-k-1}\otimes A_{k-1}$, and we refer to the introduction of \cite{marsco} for a more detailed discussion. Quivers of type  $A_{n-k-1}\otimes A_{k-1}$ were studied by Keller \cite{keller} in relation to the periodicity conjecture. A construction for mutation of quivers comprised of unit square tiles is found in Section 11 of \cite{marsco}. Such quivers were shown to be in bijection with rectangular clusters in \jjm{\cite{mc,mc1}}. We relate maximal non-$l$-intertwining collections to the representation theory of quivers of type $A^{d}_{n-k-d}\otimes A^{d}_{k-d}$, where $d=l-1$. 

Path algebras of tensor products of Dynkin type $A$ quivers have been of wide interest in the recent literature. Since these algebras are of infinite representation type, it is common to study only particular subcategories of the module category. This is the case for finding 2-cluster-tilting subcategories as in \cite{hi,pas}; or interval representations  \cite{abeny,be2,eh} and, alternatively, thin modules \cite{bbos} for applications to multi-dimensional persistence homology in Topological Data Analysis. Fomin, Pylyavskyy and Shustin conjecture that any two real \jdm{M}orsifications of real isolated plane curve singularities are related by mutation of their associated quivers if and only if they are of the same complex topological type \cite[Conjecture 5.1]{fps}. Specifically, the associated quivers are \jjm{mutation equivalent to a  relation extension of the tensor products of two type $A$ quivers}. This \jjm{is echoed by} one-dimensional persistence homology in Topological Data Analysis in \cite{eh3,mschr} where two zig-zag Morse filtrations are shown to have the same persistent homology. \jdm{Recently, Dyckerhoff, Jasso and Lekili \cite{djl} connect Morsifications of the Lefschetz fibrations of Auroux \cite{aur} to higher Auslander algebras of type $A$. It follows to question whether tensor products of higher Auslander algebras also have a geometric interpretation as a Morsification.}
We generalise the construction of Scott with the following theorem, and refer to Definition \ref{spread} for the definition of a slice.

\begin{customthm}{\ref{maintheorem}}
Let $\mathcal{T}$ be a non-intertwining collection of $\binom{\nw{n-l-1}}{l-1}$  $l$-subsets of $[n]$ that is a slice and $l \leq k \leq \nw{n-l}$\jjm{.} Let $\mathcal{C}_{k}(\mathcal{T})$  be the collection of $k$-subsets of $[n]$ expressible as a unique disjoint union $I_1\sqcup I_2\sqcup\cdots \sqcup I_l$ of intervals $I_1,\ldots,I_l$  whose beginning points $i_1,\ldots,i_l$ determine a member $(i_1,i_2,\ldots,i_l)$ of $\mathcal{T}$. 
Then $\mathcal{C}_{k}(\mathcal{T})$ is non-$l$-intertwining collection of $\binom{k-1}{l-1}\binom{\nw{n-k-1}}{l-1}$ increasing $k$-tuples in $[n]$.
\end{customthm}

Higher Auslander--Reiten theory was introduced by Iyama \cite{iy3,iy1} and is an active area of current research, see for example \jjm{\cite{di, djw, djl,fed,hio,hjv,jasso,jkv,jj1}}.
We study higher preprojective algebras of tensor products of higher Auslander algebras of type $A$. These algebras often have infinite global dimension, and hence provide useful examples for studying singularity categories, as in \cite{ahs,kvamme2,mc2,mc4}. Current examples of algebras with infinite global dimension that are not self-injective and appear in higher Auslander--Reiten theory are rather sparse, largely  limited to higher Nakayama algebras \cite{jkue} and cluster-tilted algebras \cite{ot}.
Another reason to study tensor products of higher Auslander algebras of type $A$, is that the resulting algebras are often quadratic. This is related to the property of being Koszul, and algebras with this property have been studied in connection to higher Auslander--Reiten theory in \cite{bocca,grant,gi}.  We are able to construct \nnw{$d$}-precluster-tilting subcategories, as introduced by Iyama and Solberg \cite{is}. \jdm{These} \nnw{$d$}-precluster-tilting subcategories are a weaker version of higher cluster-tilting subcategories, but which are of interest in their own right since they (also) correspond to algebras with large dominant dimension that have interesting tilting modules, as studied in \cite{lizhang,marcz,nrtz,ps} (see also \cite{chenxi}).

\begin{customthm}{\ref{precl}}
Suppose $A$ and \jm{$B$} are $d$-representation-finite \jm{$K$-}algebras and define $\Lambda=A\jm{^\mathrm{op}}\otimes\jm{{_K}B}$. Then there exists a $d$-precluster-tilting subcategory $$\mathcal{D}\subseteq\mathrm{mod}(\Pi_{2d+1}(\Lambda)).$$
\end{customthm}

\jjm{In the final section we indicate how a form of the $d$-APR-tilting of Iyama and Oppermann \cite{io} is able to model the combinatorics of mutations of maximal non-$l$-intertwining collections. We note that the confluence of quadratic algebras and APR-tilting has been studied in \cite{guoxiao,guoxiao2}.}

\section*{Acknowledgements}
\jdm{J.M. was partially supported by the Austrian Science Fund (FWF): W1230. 
N.J.W.} would like to thank Professor Karin Baur for hosting him in Graz, where some of the work behind this paper was done. He is also grateful to the University of Leicester for funding his PhD through support of a grant \jdm{of his} supervisor Professor Sibylle Schroll from the EPSRC (reference EP/P016294/1).

\section{Background}

\nw{For $i \in [m]$ we use $<_{i}$ to denote the cyclically shifted order on $[m]$ given by:
\[i <_{i} i+ 1 <_{i} < \dots <_{i} m-1 <_{i} m <_{i} 1 <_{i} \dots <_{i} i-1 .\] For $r \geq 3$, $a_{1} < \dots < a_{r}$ is a \emph{cyclic ordering} if there is an $i \in [m]$ such that $a_{1} <_{i} \dots <_{i} a_{r}$.} We denote by $(a,b) \subseteq [n]$ the open \nw{cyclic} interval and use $[a,b] \subseteq [n]$ to denote the closed \nw{cyclic} interval. Subsets of the form $[a,b]$ are called \emph{interval subsets}.

\subsection{Grassmannian cluster algebras}

\jjm{We refer to the book \cite{marshbook} for an introduction to cluster algebras from Grassmannians.} 
Recall that the Grassmannian of all $k$-dimensional subspaces of $\mathbb{C}^n$, $\mathrm{Gr}(k,n)$, can be embedded into the projective space $\mathbb{P}(\bigwedge^k(\mathbb{C}^n))$ via the Pl\"ucker embedding. The coordinates of $\bigwedge^k(\mathbb{C}^n)$ are called \emph{Pl\"ucker coordinates} and are indexed by the $k$-subsets $I=\{i_1,i_2,\ldots,i_k\}\subset{[n]},$ where $1\leq i_1< i_2< \cdots< i_k\leq n$.

Two $k$-subsets $I$ and $J$ are said to be \emph{non-crossing} (also referred to as \emph{weakly separated} in some articles such as \cite{lz,os,ops,scott-quant}
) if there do not exist elements $s<t<u<v$ (ordered cyclically) where $s,u\in I-J$, and $t,v\in J-I$. 

\begin{theorem}\cite[Theorem 3]{scott}
There is a cluster structure on the coordinate ring $\mathbb{C}[\mathrm{Gr}(k,n)]$. Each maximal collection of pairwise non-crossing $k$-subsets of $\{1,2,\ldots,n\}$ determines a cluster in the structure.
\end{theorem}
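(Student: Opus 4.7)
The plan is to construct the cluster structure by explicitly promoting the collection $\mathcal{C}_{k}(\mathbb{T}_{k,n})$ from Scott's first theorem above to an initial seed and then verifying that the resulting cluster algebra coincides with $\mathbb{C}[\mathrm{Gr}(k,n)]$. First I would adjoin the $n$ cyclic-interval $k$-subsets $[i,i+k-1]$ as frozen variables to the Pl\"ucker coordinates indexed by $\mathcal{C}_{k}(\mathbb{T}_{k,n})$, yielding a candidate seed of size $(k-1)(n-k-1)+n$. I would then define a quiver $Q_{k,n}$ on this seed whose arrows are dictated by adjacencies in the snake triangulation, and declare $(\mathbf{x}_{0},Q_{k,n})$ to be the initial seed of a cluster algebra $\mathcal{A}_{k,n}$ of geometric type inside the field of fractions of $\mathbb{C}[\mathrm{Gr}(k,n)]$.

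The core computation is then that every mutation at a mutable vertex $I$ of $Q_{k,n}$ is realised by a three-term Pl\"ucker relation
\[
\Delta_{Sac}\Delta_{Sbd}=\Delta_{Sab}\Delta_{Scd}+\Delta_{Sad}\Delta_{Sbc},
\]
with $S$ a common $(k-2)$-subset and $a<b<c<d$ cyclically ordered distinguishing indices. A direct inspection of the snake triangulation shows that the arrows entering and exiting $I$ in $Q_{k,n}$ partition the other five Pl\"ucker coordinates in the relation into exactly the two monomials on the right-hand side, so the mutated variable is again a Pl\"ucker coordinate. Iterating, every cluster variable of $\mathcal{A}_{k,n}$ reachable from $(\mathbf{x}_{0},Q_{k,n})$ is a Pl\"ucker coordinate, and since mutation replaces the unique crossing pair among $\{Sac,Sbd,Sab,Scd,Sad,Sbc\}$ by a non-crossing alternative, any such cluster consists of pairwise non-crossing $k$-subsets.

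To identify $\mathcal{A}_{k,n}$ with $\mathbb{C}[\mathrm{Gr}(k,n)]$ I would show that every Pl\"ucker coordinate $\Delta_{J}$ appears as a cluster variable. Any $k$-subset can be connected to an element of $\mathcal{C}_{k}(\mathbb{T}_{k,n})$ by a sequence of single-index swaps, each engineered to arise as the exchanged variable in a short Pl\"ucker relation, so $\Delta_{J}\in\mathcal{A}_{k,n}$. The Pl\"ucker coordinates generate the coordinate ring by the first fundamental theorem of invariant theory, yielding $\mathcal{A}_{k,n}=\mathbb{C}[\mathrm{Gr}(k,n)]$ and the first assertion of the theorem.

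The main obstacle is the second assertion, that every maximal pairwise non-crossing collection actually arises as a cluster. The non-crossing-preservation argument above is one-sided; the converse requires transitivity of mutation on the set of maximal non-crossing collections, which is essentially the Leclerc--Zelevinsky purity conjecture. I would handle the finite-type Grassmannians ($\mathrm{Gr}(2,n)$ and $\mathrm{Gr}(3,n)$ for $n\leq 8$, together with their duals) by direct enumeration of the cluster exchange graph and the corresponding generalised associahedron; in the remaining infinite-type cases one appeals to the plabic-graph combinatorics of Postnikov and the subsequent Oh--Postnikov--Speyer proof of purity for weakly separated collections.
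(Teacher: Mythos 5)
The paper does not prove this theorem; it is cited directly from Scott, and indeed the very next theorem in the paper makes clear that the second assertion also depends on the later purity results of Oh--Postnikov--Speyer and Danilov--Karzanov--Koshevoy. So there is no internal proof to compare against; your sketch must be judged on its own.

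There is a genuine error in the middle of your argument for the first assertion. You claim that, iterating, \emph{every} cluster variable of $\mathcal{A}_{k,n}$ is a Pl\"ucker coordinate, on the grounds that each exchange relation is a three-term Pl\"ucker relation. This is false once $k\geq 3$ and $n\geq 6$: for example $\mathbb{C}[\mathrm{Gr}(3,6)]$ has cluster type $D_{4}$ with sixteen cluster variables, but there are only fourteen mutable Pl\"ucker coordinates, so two cluster variables are degree-two polynomials in the Pl\"uckers and are \emph{not} obtained by your mechanism. The inductive step fails because after a mutation the local quiver configuration around a vertex need not be one for which the exchange relation is a short Pl\"ucker relation, and consequently your claim that ``any such cluster consists of pairwise non-crossing $k$-subsets'' does not follow either. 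Relatedly, even granting that every Pl\"ucker coordinate is reachable, your argument establishes only $\mathbb{C}[\mathrm{Gr}(k,n)]\subseteq\mathcal{A}_{k,n}$; the reverse inclusion $\mathcal{A}_{k,n}\subseteq\mathbb{C}[\mathrm{Gr}(k,n)]$ is not addressed, and Scott needs the Laurent phenomenon together with the normality (indeed factoriality) of the affine cone over the Grassmannian to obtain it. Your reading of the second assertion is correct in spirit---it really is the Leclerc--Zelevinsky purity statement, settled by Oh--Postnikov--Speyer and Danilov--Karzanov--Koshevoy rather than by Scott's exchange computations---but the finite-type enumeration you propose would already contradict your earlier claim that all cluster variables are Pl\"ucker coordinates, so the two halves of your proposal are mutually inconsistent as written.
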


\begin{theorem}\cite[Theorem 3]{scott}\cite[Theorem 1.6]{ops}\cite{dkk10}\label{clusterbij}
There are bijections between:
\begin{itemize}
\item Maximal collections of pairwise non-crossing $k$-subsets of $[n]$,
\item Collections of $(k-1)(n-k-1)$ pairwise non-crossing, non-interval $k$-subsets of $[n]$,
\item Clusters consisting only of Pl\"ucker coordinates in the cluster structure of the  coordinate ring $\mathbb{C}[\mathrm{Gr}(k,n)]$.
\end{itemize}
\end{theorem}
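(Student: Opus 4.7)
The plan is to prove the three-way bijection by first establishing the equivalence of the combinatorial conditions (1) and (2), and then relating these to the cluster structure in (3).

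The first task is the bijection between (1) and (2). I would begin by showing that every cyclic interval $k$-subset $[a,a+k-1]$ is non-crossing with an arbitrary $k$-subset $J$. Indeed, if a crossing $s < t < u < v$ existed with $s,u$ in the interval minus $J$ and $t,v$ in $J$ minus the interval, then one could deduce that $t$ lies in the interval $(s,u) \subseteq [a,a+k-1]$, contradicting $t \notin [a,a+k-1]$. Hence the $n$ cyclic interval subsets are non-crossing with every $k$-subset and so lie in every maximal non-crossing collection. Removing them yields a collection of non-interval $k$-subsets, and adding them back recovers the original, giving a bijection between (1) and the set of maximal non-crossing collections of non-interval $k$-subsets. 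The assertion that this latter set consists of collections of cardinality exactly $(k-1)(n-k-1)$ is the content of the \emph{purity} of the non-crossing complex: every inclusion-maximal non-crossing collection has the same size. I would establish this by invoking the combinatorial model of Oh--Postnikov--Speyer, in which maximal non-crossing collections correspond to labels of faces of reduced plabic graphs of type $(k,n)$, which necessarily have $k(n-k)+1$ faces, giving $k(n-k)+1 - (n+1) = (k-1)(n-k-1)$ non-interval labels once boundary/frozen faces and the outer face are discounted. (Alternatively, I would invoke the equivalent argument via combined tilings in Danilov--Karzanov--Koshevoy.)

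For the bijection between (2) and (3), I would follow Scott's strategy. The starting point is to exhibit one distinguished maximal non-crossing collection, e.g.\ arising from a snake triangulation via the theorem of Scott already recalled in the introduction, and verify directly that the corresponding Pl\"ucker coordinates form a seed in the Fomin--Zelevinsky cluster algebra structure on $\mathbb{C}[\mathrm{Gr}(k,n)]$: the short three-term Pl\"ucker relations give exchange relations of cluster type, with the non-interval subsets mutable and the interval subsets frozen. The key combinatorial input is that whenever two maximal non-crossing collections differ in a single element, the pair of exchanged $k$-subsets $I,I'$ together with the remaining $n+(k-1)(n-k-1)-1$ common subsets produce a short Pl\"ucker relation $p_{I}p_{I'} = p_{A}p_{B} + p_{C}p_{D}$ in which $A,B,C,D$ are themselves non-crossing with everything in the common part; this is the ``square move'' on plabic graphs. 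From this and connectivity of the mutation graph of maximal non-crossing collections (again a consequence of the plabic graph / combined tiling model), one concludes that every maximal non-crossing collection gives a cluster of Pl\"ucker coordinates, and conversely every cluster consisting solely of Pl\"ucker coordinates arises this way.

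The main obstacle is the purity statement underlying step one: showing that every maximal non-crossing collection has the same cardinality. This is not at all obvious from the local definition of non-crossing, and is precisely what requires the non-trivial machinery of plabic graphs or combined tilings. By contrast, once purity and connectivity of the mutation graph are in hand, the identification with the Pl\"ucker cluster structure follows rather formally from the compatibility of mutations with short Pl\"ucker relations.
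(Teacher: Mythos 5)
The theorem you were asked about is one the paper does not prove: it is cited to Scott, to Oh--Postnikov--Speyer, and to Danilov--Karzanov--Koshevoy, and the paper simply records the statement as background. So there is no ``paper's proof'' to compare against; your task was, in effect, to reconstruct the argument from those sources, and your sketch does track the standard route. The structure is right: interval subsets are non-crossing with everything, hence belong to every maximal non-crossing collection; after stripping them away one is left with a non-interval collection whose cardinality is fixed by the purity theorem; and the link to the Pl\"ucker cluster structure follows Scott's strategy of seeding with a snake triangulation and propagating via short Pl\"ucker relations, with connectivity of the mutation graph (square moves on plabic graphs) closing the circle. You also correctly flag purity as the genuinely hard input --- that is exactly where the plabic-graph or combined-tiling machinery is indispensable, and a ``local'' argument will not give it.

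One arithmetic slip: you write $k(n-k)+1-(n+1)=(k-1)(n-k-1)$, but the left side is $k(n-k)-n$, which is one less than the right side. The correct accounting is that a reduced plabic graph of type $(k,n)$ has $k(n-k)+1$ faces in total, of which the $n$ boundary faces carry the cyclic-interval labels, so the number of non-interval face labels is
\[
k(n-k)+1-n=(k-1)(n-k-1),
\]
i.e.\ subtract $n$, not $n+1$ (the boundary faces and the frozen/interval labels are the same $n$ faces --- there is no separate ``outer face'' to remove in this count). With that correction your outline is consistent with the cited literature.
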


\nnw{An introduction to cluster algebras from Grassmannians may be found in \cite{marshbook}.}


\subsection{Triangulations of cyclic polytopes}\label{cyclic-poly}

\begin{definition} 
Given two increasing $l$-tuples of real numbers $I=\{i_1,i_2,\ldots,i_l\}$ and $J=\{j_1,j_2,\ldots,j_l\}$, \emph{$I$ intertwines $J$} if $$i_{1} < j_{1} < i_{2} < \dots < i_{l} < j_{l}.$$
In this case, we write $I \wr J$. Unless otherwise specified, we will henceforth assume all $l$-tuples are increasing. A collection of $l$-tuples of real numbers is \emph{non-intertwining} if no pair of elements intertwine (in either order). In this paper we generally work modulo $n$ with cyclic ordering. \jjm{We will write  $I \wr J$ whenever $I$ and $J$ are intertwining in either order.}
\end{definition}
As in Definition 2.2 of \cite{ot}, we consider the \nw{collections}.
\begin{align*}
\mathbf{I}_{n}^{l-1} &:= \{ (i_{1}, \dots , i_{l}) \in [n]^{l} \mid \forall x \in \{ 1, 2, \dots ,\nw{l-1} \}, i_{x+1}\geq i_x+2 \} \\
\nw{\nonconsec{n}{l-1}} &\nw{:= \{ (i_{1}, \dots , i_{l}) \in \mathbf{I}_{n}^{l-1} \mid i_{l}+2 \leq i_{1}+n \}} 
\end{align*}
We omit the definition of a cyclic polytope, and refer the reader to \cite[Section 2]{ot} for more details.

\begin{theorem}\cite[\nnw{Theorem 1.1}]{ot}
There is a bijection between:
\begin{itemize}
\item Triangulations of the cyclic polytope $\mathrm{C}(\nw{n},2l-2)$,
\item Collections of $\binom{n-l}{l-1}$ pairwise non-intertwining $l$-tuples in $\mathbf{I}_{n}^{l-1}$.
\end{itemize}
\end{theorem}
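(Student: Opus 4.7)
The plan is to reduce the statement to a combinatorial classification of the internal simplices of the cyclic polytope $\mathrm{C}(n,2l-2)$. The first step is to invoke Gale's evenness criterion to describe the boundary complex: an $l$-subset of $[n]$ spans a face of $\partial\mathrm{C}(n,2l-2)$ precisely when the unchosen vertices fall into runs of even length (in the cyclic sense). Negating this condition, I expect the internal $(l-1)$-simplices (those $l$-subsets not contained in any boundary face) to be exactly the elements of $\nonconsec{n}{l-1}$, since the gap condition $i_{x+1}\geq i_{x}+2$ together with the cyclic clause $i_{l}+2\leq i_{1}+n$ encodes the presence of at least one unchosen vertex in each cyclic gap between chosen vertices.

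Next, I would establish the key geometric lemma: for two internal $(l-1)$-simplices $\sigma_I$ and $\sigma_J$ with vertices on the moment curve, the relative interiors of $\sigma_I$ and $\sigma_J$ intersect if and only if the tuples $I$ and $J$ intertwine (in one direction or the other). This is a classical consequence of the general-position property of the moment curve; it can be read off from the alternating signs of Vandermonde minors, or, equivalently, from the combinatorics of Radon partitions of the vertex set. It follows that a set of internal $(l-1)$-simplices can be part of a single triangulation of $\mathrm{C}(n,2l-2)$ if and only if its vertex $l$-tuples are pairwise non-intertwining, which realises one direction of the bijection as a well-defined forgetful map.

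The remaining content is that this map is in fact a bijection with image the non-intertwining subcollections of size $\binom{n-l}{l-1}$. For the count, I would argue that the number of internal $(l-1)$-simplices in any triangulation of $\mathrm{C}(n,2l-2)$ is a topological invariant, forced by a Dehn--Sommerville-style relation, and then compute the value $\binom{n-l}{l-1}$ on an explicit staircase triangulation. For surjectivity and injectivity, I would induct on $n$: removing the last vertex produces a triangulation of $\mathrm{C}(n-1,2l-2)$ together with some ``link'' data, and the non-intertwining condition is compatible with this stratification; the connectedness of the bistellar flip graph then propagates the bijection. The main obstacle I anticipate is the reconstruction step, namely that a maximal non-intertwining collection of internal $(l-1)$-simplices \emph{uniquely} determines the entire triangulation, including its top-dimensional $(2l-2)$-simplices. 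This amounts to checking that the upper and lower triangulations induced on every complementary pair of subsets fit together coherently, and is where I would expect to spend most of the effort, presumably by showing that every flip on one side corresponds to a unique bistellar flip on the other.
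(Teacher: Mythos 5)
The statement in question is a result cited from Oppermann--Thomas (\cite[Theorem~1.1]{ot}); the present paper does not prove it, so there is no in-paper argument to compare against, only the cited source.

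Your outline hits the right geometric ingredients --- Gale evenness for the boundary complex, the Radon-partition reading of intersecting simplices on the moment curve, and the reconstruction of a triangulation from its codimension-$(l-1)$ data --- but it contains a concrete inconsistency. You identify the internal $(l-1)$-simplices with $\nonconsec{n}{l-1}$, which is correct, yet you then claim the invariant count is $\binom{n-l}{l-1}$. The number of internal $(l-1)$-simplices in a triangulation of $\mathrm{C}(n,2l-2)$ is in fact $\binom{n-l-1}{l-1}$, not $\binom{n-l}{l-1}$. The theorem as stated is about $\mathbf{I}_{n}^{l-1}$, which is strictly larger than $\nonconsec{n}{l-1}$; the difference $\mathbf{I}_{n}^{l-1}\setminus\nonconsec{n}{l-1}$ consists of $\binom{n-l-1}{l-2}$ boundary $l$-tuples (e.g.\ $(1,n)$ when $l=2$) that are non-intertwining with everything and lie in every triangulation. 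The paper makes exactly this distinction in the paragraph following the statement, reducing Theorem~1.1 to its variant Theorem~1.2 for $\nonconsec{n}{l-1}$. Your argument needs to either restrict to $\nonconsec{n}{l-1}$ with the corrected count $\binom{n-l-1}{l-1}$, or else account explicitly for the fixed boundary part of $\mathbf{I}_{n}^{l-1}$; as written, the Dehn--Sommerville count and the identification of the index set do not match.

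Beyond the counting slip, you have correctly located the real difficulty: showing that a maximal non-intertwining collection of $(l-1)$-simplices uniquely determines all the top-dimensional $(2l-2)$-simplices of a triangulation. This is not a formality and is not discharged by appealing to flip-connectedness of triangulations (which itself does not obviously give uniqueness of the inverse map). In \cite{ot} this is the content of a careful combinatorial analysis, not a soft topological argument, and I would want to see that step actually carried out before accepting the proposal as a proof.
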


Furthermore Lemma 2.20 of \cite{ot} implies that $\binom{n-l}{l-1}$ is the maximal size of a non-intertwining collection of elements in $\mathbf{I}_n^{l-1}$.
 Elements of $\mathbf{I}_{n}^{l-1} \setminus \nonconsec{n}{l-1}$ cannot intertwine with any other element of $\mathbf{I}_{n}^{l-1}$, so a maximal non-intertwining collection of $l$-tuples from $\mathbf{I}_{n}^{l-1}$ consists of $\mathbf{I}_{n}^{l-1} \setminus \nonconsec{n}{l-1}$ along with a maximal non-intertwining collection of $l$-tuples from $\nonconsec{n}{l-1}$. \jjm{Compare this with the discussion at the beginning of \cite[Section 8]{ot}.} Since $|\mathbf{I}_{n}^{l-1} \setminus \nonconsec{n}{l-1}| = \binom{n-l-1}{l-2}$, we obtain that the maximal size of a non-intertwining collection of $l$-tuples from $\nonconsec{n}{l-1}$ is $\binom{n-l-1}{l-1} = \binom{n-l}{l-1} - \binom{n-l-1}{l-2}$. The following formulation holds.

\begin{theorem}\cite[\nnw{Theorem 1.2}]{ot}\label{oppthom}
There is a bijection between:
\begin{itemize}
\item Triangulations of the cyclic polytope \jjm{$\mathrm{C}(n,\nnw{2l-2)}$},
\item \nw{Collections of $\binom{n-l-1}{l-1}$ pairwise non-intertwining $l$-tuples in $\nonconsec{n}{l-1}$.}
\end{itemize}
\end{theorem}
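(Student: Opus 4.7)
The plan is to derive Theorem \ref{oppthom} as a direct consequence of the preceding bijection (between triangulations of $\mathrm{C}(n,2l-2)$ and non-intertwining collections in $\mathbf{I}_n^{l-1}$ of size $\binom{n-l}{l-1}$), by isolating a subset of $\mathbf{I}_n^{l-1}$ that is forced to appear in every maximal non-intertwining collection. Once that inertia property is pinned down, the theorem reduces to a combinatorial cardinality adjustment.

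The first step is the key structural claim: no element of $\mathbf{I}_n^{l-1} \setminus \nonconsec{n}{l-1}$ can intertwine, or be intertwined by, any other element of $\mathbf{I}_n^{l-1}$. By the definition of $\nonconsec{n}{l-1}$, its complement in $\mathbf{I}_n^{l-1}$ consists of tuples with $i_l - i_1 \geq n-1$, which, combined with $1 \leq i_1$ and $i_l \leq n$, forces $i_1 = 1$ and $i_l = n$. In the cyclic order on $[n]$ the pair $(i_l,i_1)=(n,1)$ is then cyclically adjacent, so no element can be inserted strictly between them; hence any cyclic intertwining pattern of the form $i_1, j_1, i_2, j_2, \dots, i_l, j_l$ or its reverse fails to close up, and I would verify this carefully by going through the cyclic shifts implicit in the definition.

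The second step is to compute $|\mathbf{I}_n^{l-1} \setminus \nonconsec{n}{l-1}|$. After fixing $i_1=1$ and $i_l=n$, the remaining data is a choice of $(i_2,\dots,i_{l-1})$ with $3 \leq i_2$, $i_{l-1} \leq n-2$, and consecutive gaps of at least two. The standard gap-contraction $k_x := i_x - (x-2)$ transforms this into an arbitrary $(l-2)$-subset of a set of size $n-l-1$, giving $\binom{n-l-1}{l-2}$ tuples.

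Combining these, every maximal non-intertwining collection of $l$-tuples in $\mathbf{I}_n^{l-1}$ contains $\mathbf{I}_n^{l-1} \setminus \nonconsec{n}{l-1}$ in its entirety. Removing this common fixed portion then sets up a bijection with non-intertwining collections in $\nonconsec{n}{l-1}$ of size
\[
\binom{n-l}{l-1} - \binom{n-l-1}{l-2} = \binom{n-l-1}{l-1},
\]
where the final equality is Pascal's identity. I expect the main obstacle to be the inertia claim: the cyclic ordering convention makes the interleaving condition delicate at the wraparound, so one has to check that the forced endpoints $i_1=1$, $i_l=n$ genuinely obstruct every cyclic shift of the intertwining pattern, not just the linear one. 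The cardinality step and the resulting bijection are then essentially bookkeeping on top of [ot, Theorem 1.1].
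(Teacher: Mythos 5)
Your argument matches the paper's reasoning exactly: start from OT Theorem 1.1, observe that every element of $\mathbf{I}_n^{l-1}\setminus\nonconsec{n}{l-1}$ is forced to have $i_1=1$ and $i_l=n$ and hence is inert under intertwining, compute $|\mathbf{I}_n^{l-1}\setminus\nonconsec{n}{l-1}|=\binom{n-l-1}{l-2}$, and apply Pascal's identity to land on $\binom{n-l-1}{l-1}$. The step you flag as delicate is in fact trivial, since the intertwining relation used here (as in Oppermann--Thomas) is the linear $i_1<j_1<i_2<\cdots<i_l<j_l$, and the extremal values $i_1=1$, $i_l=n$ immediately rule out both $I\wr J$ (would need $j_l>n$) and $J\wr I$ (would need $j_1<1$); no cyclic wraparound analysis is needed.
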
 



\section{Non-$l$-intertwining collections}

Non-intertwining collections may be generalised in the following way, which we recall from the introduction. 

\begin{definition}
Given an integer $l$ such that $2 \leq l \leq k$ and two increasing $k$-tuples of real numbers $I, J$, then we say $I$ \emph{$l$-intertwines} $J$ if there are increasing $l$-tuples of real numbers $I' \subseteq I$, $J' \subseteq J$ such that:
\begin{itemize}
\item $I'$ intertwines $J'$.
\item $I' \cap J = \emptyset = I \cap J'$.
\end{itemize}
 In the literature, $l$-intertwining subsets are sometimes called $(2l-2)$-separated \cite{dkk} or $l$-interlacing \cite{bbge}. 
Given a collection of $k$-subsets $\mathcal{C} \subseteq \subs{n}{k}{}$, we say that $\mathcal{C}$ is non-$l$-intertwining if no pair of elements $l$-intertwine (in either order).
\end{definition}
If $I\subseteq [n]$ is a $k$-subset with unique decomposition $I = I_{1} \sqcup \dots \sqcup I_{l}$ where $I_{i}$ are intervals then we call $I$ an \emph{$l$-ple interval}.
We denote the set of $k$-subsets of $[n]$ that are $l$-ple intervals by $\subs{n}{k}{l}$.
Given an $l$-ple interval \[ I = [t_{1},t'_{1}] \sqcup \dots \sqcup [t_{l},t'_{l}],\] we denote
\begin{align*}
\widehat{I} &:= \{ t_{1}, \dots ,t_{l} \}, \\
\widecheck{I} &:= \{ t'_{1}, \dots ,t'_{l} \}.
\end{align*}
We wish to construct maximal non-$l$-intertwining \nw{collections of $l$-ple intervals from $[n]$.} 

\begin{theorem}\cite[Theorem 1]{scott}\label{scott-thm-1}
Let $\mathbb{T}_{k,n}$ be a snake triangulation of a convex $n$-gon, with $2 \leq k \leq n-2$. Define \[ \mathcal{C}_{k}(\mathbb{T}_{k,n}) := \{ I \in \subs{n}{k}{2} \mid \widehat{I} \in \mathbb{T}_{k,n} \}.\]  Then $\mathcal{C}_{k}(\mathbb{T}_{k,n})$ is a maximal collection of pairwise non-crossing, non-interval $k$-subsets of $[n]$.
\end{theorem}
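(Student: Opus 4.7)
My plan is to verify three conditions, from which the theorem follows via Theorem \ref{clusterbij}. First, every $I \in \mathcal{C}_{k}(\mathbb{T}_{k,n}) \subseteq \subs{n}{k}{2}$ is a non-interval $k$-subset, which is immediate from the definition: a $2$-ple interval has a unique decomposition into two non-adjacent intervals and so cannot itself be a single cyclic interval. It remains to prove the non-crossing property of $\mathcal{C}_{k}(\mathbb{T}_{k,n})$ and the cardinality $|\mathcal{C}_{k}(\mathbb{T}_{k,n})| = (k-1)(n-k-1)$; the latter, together with the characterisation in Theorem \ref{clusterbij}, then forces maximality.

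For the non-crossing property, I would establish the key lemma: if $I, J \in \subs{n}{k}{2}$ cross as $k$-subsets, then $\widehat{I}$ and $\widehat{J}$ cross as $2$-subsets. Write $I = [a_{1},a_{1}'] \sqcup [a_{2},a_{2}']$ and $J = [b_{1},b_{1}'] \sqcup [b_{2},b_{2}']$, and suppose that a cyclic witness $s<t<u<v$ exhibits a crossing with $s, u \in I \setminus J$ and $t, v \in J \setminus I$. The idea is to slide each of $s, u$ backwards along its interval to the starting point $a_{i} \in \widehat{I}$ of the interval containing it, and similarly slide $t, v$ back to the appropriate $b_{j} \in \widehat{J}$. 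A case analysis on whether a slid endpoint coincides with an element of the opposite set shows that the resulting four starting points remain distinct and cyclically alternating, exhibiting a crossing of $\widehat{I}$ with $\widehat{J}$. Since the chords of the triangulation $\mathbb{T}_{k,n}$ are pairwise non-crossing, the contrapositive of the lemma yields the required non-crossing property for $\mathcal{C}_{k}(\mathbb{T}_{k,n})$.

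For the cardinality, I would sum contributions chord by chord. A diagonal $\{a,b\} \in \mathbb{T}_{k,n}$ that splits $[n]$ into two open arcs with $m-1$ and $n-m-1$ interior vertices contributes the number of compositions $l_{1}+l_{2} = k$ with $1 \leq l_{1} \leq m-1$ and $1 \leq l_{2} \leq n-m-1$, corresponding to the possible lengths of the two intervals of $I$ whose starting points are $a$ and $b$. The main obstacle is to evaluate $\sum_{\{a,b\} \in \mathbb{T}_{k,n}}(\cdots) = (k-1)(n-k-1)$; this is a combinatorial identity that depends essentially on the snake pattern, since most triangulations will not yield such a clean product. I would attack it either by induction on $n$, exploiting the recursive structure of the snake (peeling off a boundary triangle to reduce to an $(n-1)$-gon), or by exhibiting a direct bijection between $\mathcal{C}_{k}(\mathbb{T}_{k,n})$ and the lattice points of a $(k-1) \times (n-k-1)$ rectangle, since the product form of the target count suggests such a natural parameter space.
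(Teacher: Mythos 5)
The paper does not prove Theorem~\ref{scott-thm-1} itself: it is stated with a citation to \cite{scott} and used as a known result. The closest internal argument is the proof of the generalisation, Theorem~\ref{maintheorem}, which specialises to this statement when $l=2$ (snake triangulations give rise to slices in the sense of Definition~\ref{spread}). Your strategy aligns well with that proof. Your ``key lemma'' --- crossing of $I,J$ forces crossing of $\widehat{I},\widehat{J}$ --- is exactly the forward implication of the paper's Lemma~\ref{int-endpoints-lemma} (in the $l=2$ case), which is the one direction needed for the non-crossing conclusion; the paper in fact proves the full equivalence, using both $\widehat{I},\widehat{J}$ and $\widecheck{I},\widecheck{J}$, because the converse is needed elsewhere. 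For the cardinality, the second of your two options is precisely what the paper does: the proof of Theorem~\ref{maintheorem} constructs an explicit bijection $\psi$ from the set $\mathcal{S}$ of pairs of weak compositions (which for $l=2$ is a $(k-1)\times(n-k-1)$ grid) onto $\mathcal{C}_k(\mathcal{T})$, avoiding the chord-by-chord summation and the delicate dependence on the snake shape that you correctly flag as the obstacle in your first option. Your appeal to Theorem~\ref{clusterbij} for maximality, and your observation that a $2$-ple interval can never be an interval subset, both match the paper's framing.

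One caution: your ``slide each of $s,u,t,v$ back to the start of its interval'' sketch will need the case analysis you allude to, because after sliding two witnesses may collapse into the same interval or a slid endpoint may land in the other set. The paper's Lemma~\ref{int-endpoints-lemma} handles this by first arguing that the $l$ witnesses must fall into distinct intervals of $I$ (resp.\ $J$), and then locating each one inside a specific gap; you would do well to follow that pattern rather than sliding naively. With that tightening, your argument is a correct proof by essentially the route the paper takes for the generalisation.
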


We refer to Figure \nw{\ref{snake}} for an example of a snake triangulation. In the terminology of \cite{scott} a snake triangulation is a ``zig-zag'' triangulation.
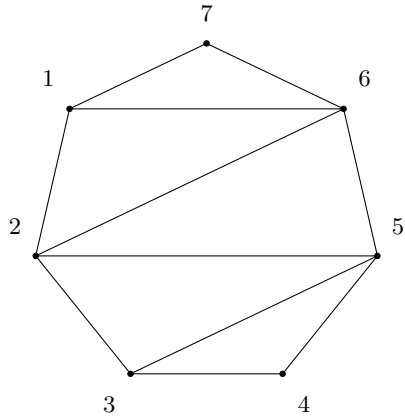
\begin{figure}
\caption{A snake triangulation of a heptagon}\label{snake}
\[\begin{tikzpicture}[scale=2]
\draw (0.,4.)-- (-1.,4.);
\draw (-1.,4.)-- (-1.6234898018587332,4.7818314824680295);
\draw (-1.6234898018587332,4.7818314824680295)-- (-1.4009688679024186,5.756759394649853);
\draw (-1.4009688679024186,5.756759394649853)-- (-0.5,6.19064313376741);
\draw (-0.5,6.19064313376741)-- (0.40096886790241903,5.756759394649852);
\draw (0.40096886790241903,5.756759394649852)-- (0.6234898018587329,4.781831482468029);
\draw (0.6234898018587329,4.781831482468029)-- (0.,4.);
\draw (-1.4009688679024186,5.756759394649853)-- (0.40096886790241903,5.756759394649852);
\draw (0.40096886790241903,5.756759394649852)-- (-1.6234898018587332,4.7818314824680295);
\draw (-1.6234898018587332,4.7818314824680295)-- (0.6234898018587329,4.781831482468029);
\draw (0.6234898018587329,4.781831482468029)-- (-1.,4.);
\begin{scriptsize}
\draw [fill=black] (-1.,4.) circle (0.5pt);
\draw[color=black] (-1.14,3.8) node {3};
\draw [fill=black] (0.,4.) circle (0.5pt);
\draw[color=black] (0.14,3.8) node {4};
\draw [fill=black] (0.6234898018587329,4.781831482468029) circle (0.5pt);
\draw[color=black] (0.76,4.98) node {5};
\draw [fill=black] (0.40096886790241903,5.756759394649852) circle (0.5pt);
\draw[color=black] (0.54,5.96) node {6};
\draw [fill=black] (-0.5,6.19064313376741) circle (0.5pt);
\draw[color=black] (-0.5,6.39) node {7};
\draw [fill=black] (-1.4009688679024186,5.756759394649853) circle (0.5pt);
\draw[color=black] (-1.54,5.96) node {1};
\draw [fill=black] (-1.6234898018587332,4.7818314824680295) circle (0.5pt);
\draw[color=black] (-1.76,4.98) node {2};
\end{scriptsize}
\end{tikzpicture}\]
\end{figure}
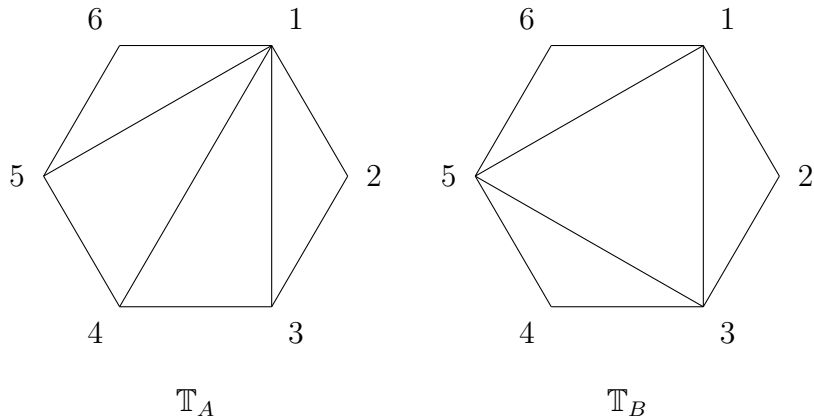
\begin{figure}[h]
\caption{Example triangulations}\label{triangs}
\[
\begin{tikzpicture}
\coordinate(2) at (0:2);
\node at (2) [right = 1mm of 2] {2};
\coordinate(1) at (60:2);
\node at (1) [above right = 1mm of 1] {1};
\coordinate(6) at (120:2);
\node at (6) [above left = 1mm of 6] {6};
\coordinate(5) at (180:2);
\node at (5) [left = 1mm of 5] {5};
\coordinate(4) at (240:2);
\node at (4) [below left = 1mm of 4] {4};
\coordinate(3) at (300:2);
\node at (3) [below right = 1mm of 3] {3};

\draw (1) -- (2);
\draw (2) -- (3);
\draw (3) -- (4);
\draw (4) -- (5);
\draw (5) -- (6);
\draw (6) -- (1);

\draw (1) -- (3);
\draw (1) -- (4);
\draw (1) -- (5);

\node(A) at (270:3) {$\mathbb{T}_A$};
\end{tikzpicture}
\hspace*{0.5cm}
\begin{tikzpicture}
\coordinate(2) at (0:2);
\node at (2) [right = 1mm of 2] {2};
\coordinate(1) at (60:2);
\node at (1) [above right = 1mm of 1] {1};
\coordinate(6) at (120:2);
\node at (6) [above left = 1mm of 6] {6};
\coordinate(5) at (180:2);
\node at (5) [left = 1mm of 5] {5};
\coordinate(4) at (240:2);
\node at (4) [below left = 1mm of 4] {4};
\coordinate(3) at (300:2);
\node at (3) [below right = 1mm of 3] {3};

\draw (1) -- (2);
\draw (2) -- (3);
\draw (3) -- (4);
\draw (4) -- (5);
\draw (5) -- (6);
\draw (6) -- (1);

\draw (1) -- (3);
\draw (3) -- (5);
\draw (1) -- (5);

\node(B) at (270:3) {$\mathbb{T}_B$};
\end{tikzpicture}
\]
\end{figure}
\begin{example}\label{not-all-triangs}
The construction in Theorem \ref{scott-thm-1} can be generalised, but \jjm{it} is not always possible.
In Figure \ref{triangs}, triangulation $\mathbb{T}_A$ has chords $\{13, 14, 15 \}$. Then $\mathcal{C}_{3}(\mathbb{T}_A)$ is a maximal  collection of pairwise non-crossing, non-interval 3-subsets of $[6]$ given by $\{ 134, 124, 145, 125 \}$.
In Figure \ref{triangs}, triangulation $\mathbb{T}_B$ has chords $\{ 13, 15, 35 \}$. Although $\mathcal{C}_{3}(\mathbb{T}_A)$ is a collection of pairwise non-crossing, non-interval $3$-subsets of $[6]$ given by $\{ 134, 356, 125 \}$, it is not maximal.
\end{example}
For $l \geq 2$, we consider triangulations of cyclic polytopes, as described in Section \ref{cyclic-poly}. As seen in Example \ref{not-all-triangs}, it is not possible to construct maximal non-$l$-intertwining collections from all triangulations using $\mathcal{C}_k$. Hence we must refine the class of triangulations of cyclic polytopes we are considering.

\begin{definition}\label{phi}
For positive integers $n$ and $l$, we define \nw{functions}
\begin{align*}
\phi\nw{_{i}}: \nonconsec{n}{l-1} &\rightarrow \mathbb{Z}_{ \geq 0}^{l} \\
(t_{1} , \dots , t_{l}) &\mapsto (t_{2}-t_{1}-2, t_{3}-t_{2}-2, \dots , t_{l}-t_{l-1}-2, t_{1}-t_{l}\nw{-2}).
\end{align*}
\nw{where $t_{1}<_{i} \dots <_{i} t_{l}$ and arithmetic is modulo $n$}.
For a non-intertwining collection $\mathcal{T}$ of $\binom{\nw{n-l-1}}{l-1}$ elements in $\nonconsec{n}{l-1}$, we denote \[\phi\nw{_{i}}(\mathcal{T}) = \{ \phi\nw{_{i}}(T) \mid T \in \mathcal{T} \}.\]
\end{definition}

\begin{definition}\label{spread}
A non-intertwining collection of $\binom{\nw{n-l-1}}{l-1}$ elements in $\nonconsec{n}{l-1}$ is a \emph{slice} if \nw{there is an $i \in [n]$ such that} \[\phi\nw{_{i}}(\mathcal{T})=\{ (p_{1}, \dots , p_{l}) \in \mathbb{Z}_{\geq 0}^{l} \mid \sum_{j=1}^{l}p_{j}=\nw{n-2l} \}.\]
\end{definition}
This definition is intended to mirror the notion of a slice in \cite[Definition 4.8]{io}.
\begin{example}\label{ex-spread}
The non-intertwining collection of $\binom{4}{2}$ elements in $\nonconsec{8}{2}$ $$\mathcal{T}_1 = \{ 135, 136, 137, 146, 147, 157 \}$$ is a slice: $\phi\nw{_{1}}(\mathcal{T}_1)=\{ 002, 011, 020, 101, 110, 200 \}$. On the other hand, the non-intertwining collection of $\binom{4}{2}$ elements in $\nonconsec{8}{2}$ $$\mathcal{T}_2 = \{ 135, 136, 137, 357, 147, 157 \}$$ is not a slice: \nw{$\phi_{i}(135)=\phi_{i}(357)$ for $i \in \{8,1\}$, $\phi_{i}(357)=\phi_{i}(571)$ for $i \in \{2,3\}$, $\phi_{4}(471)=\phi_{4}(613)$, $\phi_{5}(713)=\phi_{5}(571)$, $\phi_{i}(713)=\phi_{i}(135)$ for $i \in \{6,7\}$}. 
\end{example}
We now characterise when two \emph{$l$-ple} intervals are $l$-intertwining.
\begin{lemma}\label{int-endpoints-lemma}
Let  $I,J$ be two $k$-subsets of $[n]$ that are $l$-ple intervals with $|I| = |J| = k \geq l$. Then $I$ and $J$ are $l$-intertwining if and only if both $\widehat{I}\wr\widehat{J}$ and  $\widecheck{I}\wr \widecheck{J}$.
\end{lemma}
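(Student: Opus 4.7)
The plan is to prove both directions of the equivalence by pinning down where the witnesses of $l$-intertwining sit relative to the interval decompositions of $I$ and $J$. Throughout I write
\[
I = [t_1, t'_1] \sqcup \cdots \sqcup [t_l, t'_l], \qquad J = [s_1, s'_1] \sqcup \cdots \sqcup [s_l, s'_l],
\]
so that $\widehat{I} = \{t_1, \ldots, t_l\}$, $\widecheck{I} = \{t'_1, \ldots, t'_l\}$, and analogously for $J$.

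For the forward direction, I would take $I' = \{a_1 < \cdots < a_l\} \subseteq I$ and $J' = \{b_1 < \cdots < b_l\} \subseteq J$ witnessing the $l$-intertwining. The first observation is that each $a_i$ must lie in a distinct interval of $I$: if $a_i, a_{i+1}$ both sat in $[t_p, t'_p]$, then $b_i$ between them would also lie in $[t_p, t'_p] \subseteq I$, contradicting $b_i \notin I$. So $a_i \in [t_i, t'_i]$, and symmetrically $b_i \in [s_i, s'_i]$. I would then chase the constraints $a_i \notin J$ and $b_i \notin I$: the former, together with $a_i > b_{i-1} \geq s_{i-1}$ (when $i \geq 2$) and $a_i < b_i \leq s'_i$, forces $s'_{i-1} < a_i < s_i$; the latter, together with $b_i > a_i \geq t_i$ and $b_i < a_{i+1} \leq t'_{i+1}$ (when $i \leq l-1$), forces $t'_i < b_i < t_{i+1}$. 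Concatenating $t_i \leq a_i < s_i$ with $s_i \leq b_i < t_{i+1}$ yields the chain $t_1 < s_1 < t_2 < \cdots < t_l < s_l$, and analogously $t'_1 < s'_1 < \cdots < t'_l < s'_l$, giving both $\widehat{I} \wr \widehat{J}$ and $\widecheck{I} \wr \widecheck{J}$.

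For the converse I would first argue that the two intertwinings must be in the \emph{same} direction: if $\widehat{I}$ intertwines $\widehat{J}$ but $\widecheck{J}$ intertwines $\widecheck{I}$, then $t_i < s_i \leq s'_i < t'_i$ for every $i$, so $[s_i, s'_i] \subsetneq [t_i, t'_i]$ and $|J| \leq |I| - 2l$, contradicting $|I| = |J|$. By the symmetry $I \leftrightarrow J$ I may therefore assume $t_1 < s_1 < \cdots < t_l < s_l$ and $t'_1 < s'_1 < \cdots < t'_l < s'_l$. The natural candidate witnesses are then
\[
a_i := \min(s_i - 1, t'_i), \qquad b_i := \max(t'_i + 1, s_i).
\]
The key calculation is that $[t_i, t'_i] \cap J = [t_i, s'_{i-1}] \cup [s_i, t'_i]$ (each summand empty unless the corresponding inequality holds), using $t'_i > s'_{i-1}$ from the intertwining of ends to rule out $[t_i, t'_i]$ lying inside a single $J$-interval; dually for $[s_i, s'_i] \cap I$. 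This places $a_i \in [t_i, t'_i] \setminus J$ and $b_i \in [s_i, s'_i] \setminus I$. The interlacing $a_1 < b_1 < \cdots < a_l < b_l$ then follows since $a_i \leq t'_i < t'_i + 1 \leq b_i$ and $b_i \leq t_{i+1} - 1 < t_{i+1} \leq a_{i+1}$, where the gap $t_{i+1} \geq t'_i + 2$ uses that $I$ is an $l$-ple interval with a unique decomposition into intervals.

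The main obstacle I expect is the bookkeeping in the converse: confirming that the explicit candidates $a_i$ and $b_i$ genuinely land in $I \setminus J$ and $J \setminus I$. Pinning down $[t_i, t'_i] \cap J$ and $[s_i, s'_i] \cap I$ precisely requires combining the intertwining inequalities on \emph{both} starts and ends with the non-adjacency conditions $t_{j+1} \geq t'_j + 2$ and $s_{j+1} \geq s'_j + 2$ forced by the uniqueness of the $l$-ple decompositions of $I$ and $J$. The ``same direction'' reduction, which uses the size constraint $|I| = |J|$ essentially, is the ingredient that makes the clean formulas for $a_i$ and $b_i$ work.
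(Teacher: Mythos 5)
Your proof is correct and follows essentially the same strategy as the paper's: in the forward direction you show the witnesses must land one per interval of $I$ and of $J$ and then chase the inequalities to extract the chains of starting and ending points, and in the converse you first use $|I|=|J|$ to force the two endpoint intertwinings into the same direction and then produce witnesses sitting in the gaps $(s'_{i-1}, s_i)$ and $(t'_i, t_{i+1})$. The only difference is cosmetic: you give closed formulas $a_i = \min(s_i - 1, t'_i)$ and $b_i = \max(t'_i + 1, s_i)$, where the paper simply picks a point in the nonempty intersection $[i_s, i'_s] \cap (j'_{s-1}, j_s)$ and its dual.
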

\begin{proof}
Let  $I = [i_{1}, i'_{1}] \sqcup \dots \sqcup [i_{l}, i'_{l}]$, $J = [j_{1}, j'_{1}] \sqcup \dots \sqcup [j_{l}, j'_{l}]$. First suppose $I$ and $J$ are $l$-intertwining; we have $l$-subsets $A \subseteq I$ and $B \subseteq J$ such that $A \cap J = \emptyset = B \cap I$ and that $A$ and $B$ are intertwining. Hence let $A = \{ a_{1}, \dots , a_{l} \}$ and $B = \{ b_{1}, \dots , b_{l} \}$ such that \[ a_{1} < b_{1} < a_{2} < \dots < a_{l} < b_{l}.\] For any $ 1\leq r < s\leq l$ and some $1\leq t\leq l$, we cannot have both $a_{r}, a_{s} \in [i_{t}, i'_{t}]$, since this implies that $b_{r}, \dots, b_{s-1} \in [i_{t}, i'_{t}] \subseteq I$, but $B \cap I = \emptyset$. Hence, without loss of generality, we may assume that $a_{t} \in [i_{t}, i'_{t}]$ for all $1\leq t\leq l$. Similarly, we reason that $b_{t} \in [j_{t}, j'_{t}]$ for all $1\leq t\leq l$. Moreover, since $B \cap I = \emptyset$, we have $b_{t} \in (i'_{t}, i_{t+1})$, and similarly $a_{t} \in (j'_{t-1}, j_{t})$. Then \[ i_{1} \leq a_{1} < j_{1} \leq b_{1} < i_{2} \leq a_{2} < \dots  < i_{l} \leq a_{l} < j_{l} \leq b_{l},\] and so \[ i_{1} < j_{1} < i_{2} < \dots < i_{l} < j_{l}.\] In other words, $\widehat{I}\wr \widehat{J}$. The same reasoning implies  $\widecheck{I}\wr \widecheck{J}$.

Conversely, assume $(\widehat{I}, \widehat{J})$ and $(\widecheck{I}, \widecheck{J})$ are both intertwining pairs of subsets and $i_{1} < j_{1} < i_{2} < \dots < i_{l} < j_{l}$. Then we must have $i'_{1} < j'_{1} < i'_{2} < \dots < i'_{l} < j'_{l}$, otherwise every interval of $I$ strictly contains an interval of $J$, contradicting $|I| = |J|$. For each $1\leq s\leq l$, $j'_{s-1} < j_{s}$, $i_{s} < j_{s}$ and  $j'_{s-1} < i'_{s}$, so $[i_{s}, i'_{s}] \cap (j'_{s-1}, j_{s}) \neq \emptyset$. Hence pick $a_{s} \in [i_{s}, i'_{s}] \cap (j'_{s-1}, j_{s})$ and, similarly, pick $b_{s} \in [j_{s}, j'_{s}] \cap (i'_{s}, i_{s+1})$ for each $1\leq s \leq l$. Let $A = \{ a_{1}, \dots ,  a_{l} \}$ and $ B = \{ b_{1}, \dots, b_{l} \}$. Then, by construction, $A \subseteq I - J$, $B\subseteq J - I$, $|A| = |B| = l$ and $A$ and $B$ are intertwining. Therefore $I$ and $J$ are $l$-intertwining.
\end{proof}

Now we describe our first main construction.

\begin{theorem}\label{maintheorem}
Let $\mathcal{T}$ be a non-intertwining collection of $\binom{\nw{n-l-1}}{l-1}$ elements in $\nonconsec{n}{l-1}$ that is a slice and $l\leq k\leq \nw{n-l}$. Define the collection \[ \mathcal{C}_{k}(\mathcal{T}) := \{ I \in \subs{n}{k}{l} \mid \widehat{I} \in \mathcal{T} \}.\] Then $\mathcal{C}_{k}(\mathcal{T})$ is a collection of $\binom{k-1}{l-1}\binom{\nw{n-k-1}}{l-1}$ non-$l$-intertwining $k$-tuples. 
Dually, the collection \[ \mathcal{C}'_{k}(\mathcal{T}) := \{ I \in \subs{n}{k}{l} \mid \widecheck{I} \in \mathcal{T} \} \] is a collection of $\binom{k-1}{l-1}\binom{\nw{n-k-1}}{l-1}$ non-$l$-intertwining $k$-tuples. 
\end{theorem}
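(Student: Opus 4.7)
The plan is to verify the two assertions of the theorem separately: first that $\mathcal{C}_k(\mathcal{T})$ is pairwise non-$l$-intertwining, and then that it has cardinality $\binom{k-1}{l-1}\binom{n-k-1}{l-1}$. The dual statement for $\mathcal{C}'_k(\mathcal{T})$ will follow by an analogous argument after replacing $\widehat{\cdot}$ by $\widecheck{\cdot}$ throughout.

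For the non-$l$-intertwining part, I invoke Lemma \ref{int-endpoints-lemma}, which reduces the problem to a statement about starting points: two $l$-ple intervals $I, J$ of size $k$ are $l$-intertwining if and only if both $\widehat{I} \wr \widehat{J}$ and $\widecheck{I} \wr \widecheck{J}$. Since every $I \in \mathcal{C}_k(\mathcal{T})$ has $\widehat{I} \in \mathcal{T}$ and $\mathcal{T}$ is non-intertwining by hypothesis, the first of these conditions already fails for any pair in $\mathcal{C}_k(\mathcal{T})$, and the claim follows immediately.

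For the counting part, I partition $\mathcal{C}_k(\mathcal{T})$ according to the value of $\widehat{I} \in \mathcal{T}$ and count preimages. Fix $T = (t_1, \ldots, t_l) \in \mathcal{T}$ with the cyclic order determined by the index $i \in [n]$ witnessing the slice condition, and write $(p_1, \ldots, p_l) = \phi_i(T)$. An $l$-ple interval $I \in \subs{n}{k}{l}$ with $\widehat{I} = T$ is specified by the interval lengths $\ell_s \geq 1$ subject to $\sum_s \ell_s = k$ and the non-merging constraint $\ell_s \leq p_s + 1$; setting $m_s = \ell_s - 1$, this becomes $0 \leq m_s \leq p_s$ with $\sum_s m_s = k - l$. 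Summing over $T \in \mathcal{T}$ and using the slice condition to identify $\phi_i|_\mathcal{T}$ as a bijection onto $\{(p_s) \in \mathbb{Z}_{\geq 0}^l : \sum_s p_s = n-2l\}$, the total becomes the number of pairs of tuples $(m_s), (q_s) \in \mathbb{Z}_{\geq 0}^l$ with $q_s := p_s - m_s$, $\sum_s m_s = k-l$, and $\sum_s q_s = n-k-l$. The two tuples decouple, and two applications of stars and bars yield exactly $\binom{k-1}{l-1}\binom{n-k-1}{l-1}$. I expect the main care needed will be in handling the cyclic arithmetic in $\phi_i$ correctly, in verifying that the non-merging constraint translates precisely into $\ell_s \leq p_s + 1$, and in confirming that the uniqueness clause in the definition of $\subs{n}{k}{l}$ rules out any over-counting---a routine but slightly fiddly translation between interval endpoints and the gap data recorded by $\phi_i$.
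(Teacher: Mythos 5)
Your proof is correct and follows essentially the same approach as the paper: both use Lemma~\ref{int-endpoints-lemma} to reduce the non-$l$-intertwining claim to the non-intertwining property of $\mathcal{T}$, and both count $\mathcal{C}_k(\mathcal{T})$ via the slice condition by decoupling the data of an $l$-ple interval into the two independent stars-and-bars quantities $\sum m_s = k-l$ and $\sum q_s = n-k-l$. The paper packages the count as an explicit bijection $\psi$ from the set $\mathcal{S}$ of pairs $((q_s),(r_s))$ to $\mathcal{C}_k(\mathcal{T})$, while you organise the same computation by partitioning $\mathcal{C}_k(\mathcal{T})$ along the fibres of $\widehat{(\cdot)}$ and summing; these are the same argument in different clothing, and if anything your fibre-wise treatment makes the injectivity/surjectivity of the implicit correspondence more transparent than the paper's one-line justification. (Incidentally, the paper's displayed image set reads $\sum p_j = n-2l+1$, which is a typographical slip for $n-2l$, as your version correctly has.)
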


\begin{proof}
Let $\mathcal{T}$ be a non-intertwining collection of $\binom{\nw{n-l-1}}{l-1}$ elements in $\nonconsec{n}{l-1}$ that is a slice. 
Let \[\mathcal{S}:= \{ ((q_{1}, \dots , q_{l}),(r_{1}, \dots , r_{l})) \in \mathbb{Z}_{\geq 0}^{l}\times \mathbb{Z}_{\geq 0}^{l} \mid \sum_{j=1}^{l}q_{j}=k-l, \sum_{j=1}^{l}r_{j}=\nw{n-k-l} \}.\] 
\nw{Then} $|\mathcal{S}|=\binom{k-1}{l-1}\binom{\nw{n-k-1}}{l-1}$.
There is a natural map
\begin{align*}
\mathcal{S} &\rightarrow  \mathbb{Z}_{\geq 0}^{l}\\
((q_{1}, \dots , q_{l}),(r_{1}, \dots , r_{l})) &\mapsto (q_{1}+r_{1}, \dots , q_{l}+r_{l}),
\end{align*} with image $$\{ (p_{1}, \dots , p_{l})\in  \mathbb{Z}_{\geq 0}^{l} \mid \sum_{j=1}^{l}p_{j}=n-2l+1 \}.$$
Since $\mathcal{T}$ is a slice, this  image is precisely the image $\phi\nw{_{i}}(\mathcal{T})$ \nw{for some $i$}. Hence there is a well-defined inverse of $\phi\nw{_{i}}$. So let $(t_{1}, \dots ,t_{l})=\phi\nw{_{i}}^{-1}(q_{1}+r_{1}, \dots , q_{l}+r_{l})$. Consider the map \nw{$\psi: \mathcal{S} \rightarrow \mathcal{C}_{k}(\mathcal{T})$,}
\begin{align*}
\psi:((q_{1}, \dots , q_{l})(r_{1}, \dots , r_{l})) &\mapsto [t_{1}, t_{1}+q_{1}] \sqcup \dots \sqcup [t_{l}, t_{l}+q_{l}].
\end{align*}
By definition $\phi\nw{_{i}}(t_1,\ldots,t_l)=(t_2-t_1-2,\ldots,t_l-t_{l-1}-2,t_1-t_l\nw{-2})$. Then for all $1\leq j\leq l$ we have $t_{j+1}-t_j-2=q_i+r_i$ and so $(t_{j+1}-(t_j+ q_j))\geq2 $. Hence $\psi$ has image $\mathcal{C}_k$ \jdm{and an appropriate choice of $q_1,\ldots,q_l$ describes any element of $\mathcal{C}_k$ as an image of $\psi$}.  Moreover, \jdm{no two elements of $\mathcal{S}$ can have the same image under $\psi$. Therefore $\psi$ is bijective and} $|\mathcal{C}_{k}(\mathcal{T})|=|\mathcal{S}|=\binom{k-1}{l-1}\binom{\nw{n-k-1}}{l-1}$. 
That $\mathcal{C}_{k}(\mathcal{T})$ is non-$l$-intertwining follows from Lemma \ref{int-endpoints-lemma}. A similar proof applies to $\mathcal{C}'_{k}(\mathcal{T})$.
\end{proof}

\begin{example}\label{ex-construction}
Following on from Example \ref{ex-spread}, the non-intertwining collection of $\binom{4}{2}$ elements in $\nw{\nonconsec{8}{2}}$ $$\mathcal{T}_1 = \{ 135, 136, 137, 146, 147, 157 \}$$ extends to the non-$3$-intertwining collection of $\binom{4-1}{3-1}\binom{\nw{8-4-1}}{3-1}=\binom{3}{2}\binom{3}{2}=3 \times 3=9$ elements: $$\mathcal{C}_{4}(\mathcal{T}_1) = \{ 1356, 1346, 1367, 1347, 1246, 1467, 1247, 1457, 1257 \}.$$ 
\end{example}


We expect that the maximal size of a non-$l$-intertwining collection $\mathcal{C} \subset \subs{n}{k}{l}$ is $\binom{k-1}{l-1}\binom{\nw{n-k-1}}{l-1}$, although this appears to be an open question in the literature. Non-$l$-intertwining collections have been studied to some extent as $(2l-2)$-separated collections  in \cite{dkk,dkk3}. However these works are concerned with collections of subsets where subsets of all sizes are permitted, rather than fixing a size $k$. 
These results are based on studies of oriented matroids \cite{gp} and cyclic zonotopes \cite{galashin}. 

\section{\jdm{Higher} precluster-tilting subcategories}\label{precl-sect}

Consider a finite-dimensional algebra $\nnw{\Lambda}$ over a field $K$, and fix a positive integer $d$. We will assume that $\nnw{\Lambda}$ is of the form $KQ/I$, where $KQ$ is the path algebra over some quiver $Q$ and $I$ is an admissible ideal of $KQ$. For two arrows in $Q$, $\alpha:i\rightarrow j$ and $\beta:j\rightarrow k$, we denote their composition as $\beta\alpha:i\rightarrow k$. Let $\nnw{\Lambda}^\mathrm{op}$ denote the opposite algebra of $\nnw{\Lambda}$.
An $\nnw{\Lambda}$-module will mean a finitely-generated left $\nnw{\Lambda}$-module; by $\mathrm{mod}(\nnw{\Lambda})$ we denote the category of $\nnw{\Lambda}$-modules. The functor $D=\mathrm{Hom}_K(-,K)$ defines a duality; by $\otimes$ we mean $\otimes_K$ and we denote the syzygy by $\Omega$. \jjm{Denote by $\nu:= D\nnw{\Lambda}\otimes_{\nnw{\Lambda}}-\cong D\mathrm{Hom}_{\nnw{\Lambda}}(-,\nnw{\Lambda})$ the Nakayama functor in $\mathrm{mod}(\nnw{\Lambda})$}. Let $\mathrm{add}(M)$ be the full subcategory of $\mathrm{mod}(\nnw{\Lambda})$ composed of all $\nnw{\Lambda}$-modules isomorphic to direct summands of finite direct sums of copies of $M$. 
\begin{definition}\cite[Definition 2.2]{iy1}
For a finite-dimensional algebra $\nnw{\Lambda}$, a module $M\in\mathrm{mod}(\nnw{\Lambda})$ is a \emph{$d$-cluster-tilting module} if it satisfies the following conditions:
\begin{align*}
\mathrm{add}(M)&=\{X\in\mathrm{mod}(\nnw{\Lambda})\mid \mathrm{Ext}^i_{\nnw{\Lambda}}(M,X)=0\ \forall\ 0< i<d \}.\\
\mathrm{add}(M)&=\{X\in\mathrm{mod}(\nnw{\Lambda}) \mid \mathrm{Ext}^i_{\nnw{\Lambda}}(X,M)=0\ \forall\ 0< i<d \}.
\end{align*} 
In this case $\mathrm{add}(M)$ is a \emph{$d$-cluster-tilting subcategory} of $\mathrm{mod}(\nnw{\Lambda})$.
\end{definition}
If $\nnw{\Lambda}$ is a finite-dimensional algebra such that there exists a $d$-cluster-tilting module $M\in \mathrm{mod}(\nnw{\Lambda})$ as well as $\mathrm{gl.dim}(\nnw{\Lambda})\leq d$, then $\nnw{\Lambda}$ is \emph{$d$-representation finite}. 

\begin{remark}
An observation that we  frequently use is that an algebra $\nnw{\Lambda}$ is $d$-representation finite if and only if its opposite algebra $\nnw{\Lambda}^\mathrm{op}$ is also $d$-representation finite. This follows from applying the standard duality $D$ to the categories \jjm{$\mathrm{mod}(\nnw{\Lambda})$ and $\mathrm{mod}(\nnw{\Lambda}^\mathrm{op})$.}
\end{remark}
Define  $\tau_d:=\tau\Omega^{d-1}$ to be the \emph{$d$-Auslander--Reiten translation} and  $\tau_{d}^-:=\nw{\tau^{-}\Omega^{-(d-1)}}$ to be the \emph{inverse $d$-Auslander--Reiten translation}.

\begin{theorem}\cite[Theorem 1.6]{iy2}\label{d-clus-tilt}
Let $\Lambda$ be a $d$-representation-finite algebra. Then $\Lambda$ has a unique $d$-cluster-tilting module $M$ and
\[ \mathrm{add}M := \{ \tau_{d}^{-i}\Lambda \mid i \in \mathbb{N} \} = \{ \tau_{d}^{i}D\Lambda \mid i \in \mathbb{N} \}.\]
\end{theorem}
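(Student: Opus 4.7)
The plan is to take any $d$-cluster-tilting module $M$ with $\mathcal{C}:=\mathrm{add}(M)$ and show that $\mathcal{C}$ must coincide with the $\tau_d^-$-orbit of $\Lambda$; this simultaneously yields uniqueness of $M$, the explicit description, and the equality with the $\tau_d$-orbit of $D\Lambda$.

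First, I would verify that $\Lambda,D\Lambda\in\mathcal{C}$. The argument uses the higher Auslander--Reiten duality
\[ D\mathrm{Ext}^i_\Lambda(X,Y)\cong\mathrm{Ext}^{d-i}_\Lambda(Y,\tau_dX) \]
for $0<i<d$, which is valid because $\mathrm{gl.dim}(\Lambda)\leq d$, combined with projectivity of $\Lambda$ and injectivity of $D\Lambda$ to force the relevant Ext-groups to vanish.

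The technical heart of the proof is to show that $\mathcal{C}$ is closed under $\tau_d$ (on non-projective indecomposables) and $\tau_d^-$ (on non-injective ones). For indecomposable non-injective $X\in\mathcal{C}$, applying the AR-duality above translates the Ext-vanishing conditions $\mathrm{Ext}^i_\Lambda(Y,\tau_d^-X)=0=\mathrm{Ext}^i_\Lambda(\tau_d^-X,Y)$ for $Y\in\mathcal{C}$ and $0<i<d$ into Ext-vanishing conditions among objects of $\mathcal{C}$ that hold by hypothesis; hence $\tau_d^-X\in\mathcal{C}$. A dual argument handles $\tau_d$.

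With these ingredients in hand, starting from an indecomposable summand $P$ of $\Lambda$ and iterating $\tau_d^-$ yields a sequence of pairwise non-isomorphic indecomposables of $\mathcal{C}$. Since $\mathcal{C}$ has only finitely many indecomposables up to isomorphism, the sequence must terminate at an indecomposable $I$ with $\tau_d^-I=0$, forcing $I$ to be injective; reversing via $\tau_d$ shows every indecomposable summand of $D\Lambda$ arises this way, so $\{\tau_d^{-i}\Lambda\mid i\geq 0\}=\{\tau_d^{i}D\Lambda\mid i\geq 0\}$. To see that this set exhausts $\mathcal{C}$, any indecomposable $X\in\mathcal{C}$ can be traced backwards via $\tau_d$ using its minimal projective resolution (whose syzygies lie in $\mathcal{C}$ by the Ext-vanishing and finite global dimension) to connect $X$ to a projective summand. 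The main obstacle will be establishing closure under $\tau_d^{\pm}$ and the higher AR-duality itself; these are the core technical inputs of higher Auslander--Reiten theory and rely essentially on $\mathrm{gl.dim}(\Lambda)\leq d$ so that $\tau_d=\tau\Omega^{d-1}$ restricts well to $\mathcal{C}$, with finite representation type of $\mathcal{C}$ making the orbit arguments terminate.
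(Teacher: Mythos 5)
This statement is not proved in the paper: it is imported wholesale as a citation to Iyama's \cite[Theorem 1.6]{iy2}, so there is no in-paper proof to compare against. Judged on its own terms, your sketch correctly reproduces the broad architecture of Iyama's argument (containment of $\Lambda$ and $D\Lambda$ in any $d$-cluster-tilting subcategory $\mathcal{C}$, closure of $\mathcal{C}$ under $\tau_d$ and $\tau_d^-$ via higher AR-duality, and a finiteness-driven orbit argument), but two steps as stated would not survive scrutiny.

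First, your parenthetical that the syzygies $\Omega^j X$ of an indecomposable $X\in\mathcal{C}$ ``lie in $\mathcal{C}$ by the Ext-vanishing and finite global dimension'' is false in general: the $d$-rigidity of $\mathcal{C}$ gives no control over $\Omega X$ itself. What the theory actually gives is that $\tau_d X=\tau\Omega^{d-1}X$ lands back in $\mathcal{C}$ as a single package (this is exactly the closure under $\tau_d$ that you invoke earlier), so ``tracing backwards via the projective resolution'' should be replaced by ``tracing backwards via $\tau_d$''. Second, and more seriously, your exhaustion step silently assumes that iterating $\tau_d$ from an arbitrary $X\in\mathcal{C}$ reaches a projective. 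Finiteness of $\mathrm{ind}(\mathcal{C})$ alone does not give this: one must rule out $\tau_d$-periodic indecomposables, i.e.\ $\tau_d^k X\cong X$ with $k>0$. That is a genuine theorem requiring the hypothesis $\mathrm{gl.dim}(\Lambda)\leq d$ in an essential way (in Iyama's treatment it comes from analysing $\nu_d=\nu[-d]$ on $D^b(\Lambda)$ and showing its negative powers eventually push $\Lambda$ out of module degree), and it is exactly the ingredient missing from your proposal. As a minor remark, $\Lambda,D\Lambda\in\mathcal{C}$ follows immediately from the two-sided characterisation of $\mathrm{add}(M)$ in the definition of a $d$-cluster-tilting module, with no appeal to AR-duality needed.
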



In the context of generalising classical Auslander--Reiten theory to higher dimensions, Iyama introduced in \cite{iy2} the notion of a higher Auslander algebra. For a quiver $Q$ of type $A_n$, there is an explicit description of the \jjm{$d$-Auslander algebra $A^d_n$} of the path algebra $KQ$, \jjm{a $d$-representation-finite algebra. The category $\mathrm{mod}(A^{d}_n)$ has a unique $d$-cluster-tilting subcategory by Theorem \ref{d-clus-tilt}.}

\jjm{\begin{construction}\label{ARdefn}\jdm{\cite{iy1,ot}} Let $A_n^{1}$ be the following quiver of Dynkin type $A_n$. 
\[ \jjm{1 \rightarrow 2 \rightarrow \cdots \rightarrow n} \]
Denote by $A_n^{d}$ the $(d-1)$-Auslander algebra of $A_n^{d-1}$, described as follows.
Let $Q^{d}_{n}$ be the quiver with vertices indexed by the elements of $\mathbf{I}_{n+2d-2}^{d-1}$ and arrows $\alpha_{i}(I):I\rightarrow J$ wherever $I\setminus\{i\}=J\setminus\{i+1\}$ for some $i\in I$. Let $I^{d}_n$ be the admissible ideal of $\nnw{K}Q^{d}_n$ generated by the elements $$\alpha_j(\alpha_i(I))-\alpha_i(\alpha_j(I)),$$ which range over the elements of $\mathbf{I}_{n+2d-2}^{d-1}$. By convention, $\alpha_i(I)=0$ whenever $I$ or $I\cup\{i+1\}\setminus \{i\}$ is not a member of $\mathbf{I}_{n+2d-2}^{d-1}$\nnw{. H}ence there are zero relations included in the ideal $I^{d}_n$. 
\end{construction} }

\jjm{The algebra of $A^2_4$ is as follows, with dotted arrows indicating relations:}

$$\begin{tikzpicture}[xscale=7.5,yscale=5]
\node(xa) at (-2,1){$13$};
\node(xb) at (-1.6,1){$24$};
\node(xc) at (-1.2,1){$35$};
\node(xd) at (-0.8,1){$46$};

\node(xab) at (-1.8,1.2){$14$};
\node(xac) at (-1.4,1.2){$25$};
\node(xad) at (-1,1.2){$36$};

\node(xbb) at (-1.6,1.4){$15$};
\node(xbc) at (-1.2,1.4){$26$};

\node(xcc) at (-1.4,1.6){$16$};

\draw[->](xa) edge(xab);
\draw[->](xb) edge(xac);
\draw[->](xc) edge(xad);

\draw[->](xbb) edge(xac);
\draw[->](xbc) edge(xad);

\draw[->](xbb) edge(xcc);

\draw[->](xab) edge(xb);
\draw[->](xac) edge(xc);
\draw[->](xad) edge(xd);

\draw[->](xab) edge(xbb);
\draw[->](xac) edge(xbc);

\draw[->](xcc) edge(xbc);

\draw[-,dotted](xa)edge (xb);
\draw[-,dotted](xb)edge (xc);
\draw[-,dotted](xc)edge (xd);

\draw[-,dotted](xab)edge (xac);
\draw[-,dotted](xac)edge (xad);

\draw[-,dotted](xbb)edge (xbc);
\end{tikzpicture}$$

\jjm{A combinatorial description of the $d$-cluster-tilting subcategory $\mathcal{C}\subseteq \mathrm{mod}(A^d_n)$ is possible.}
\jjm{\begin{theorem}\cite[Theorem 3.6]{ot}\label{oppthom2}
The objects of the $d$-cluster-tilting subcategory $\mathcal{C}\subseteq \mathrm{mod}(A^d_n)$ may be indexed by $\mathbf{I}_{n+2d}^{d}$. For each $I\in\mathbf{I}_{n+2d}^{d}$, denote by $M_I$ the object of this $d$-cluster-tilting subcategory.
\begin{itemize}
\item The module $M_I$ is projective if and only if $I=\{1,i_0+2,\ldots,i_{d-1}+2\}$ for some $\{i_0,\ldots, i_{d-1}\}\in \mathbf{I}_{n+2d-2}^{d-1}$. 
\item The module $M_I$ is injective if and only if $I=\{i_0,\ldots,i_{d-1},n+2d\}$ for some $\{i_0,\ldots, i_{d-1}\}\in \mathbf{I}_{n+2d-2}^{d-1}$.
\item $\mathrm{Hom}_{A^d_n}(M_{i_0,\ldots,i_d},M_{j_0,\ldots,j_d})\ne 0\ \iff i_0-1< j_0< i_1-1<\cdots <i_d-1<j_d$. 
\end{itemize}
\end{theorem}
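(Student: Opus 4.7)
The plan is to proceed by induction on $d$, exploiting the iterative definition in which $A^d_n$ is the $(d-1)$-Auslander algebra of $A^{d-1}_n$, together with Iyama's higher Auslander correspondence and Theorem \ref{d-clus-tilt}.

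For the base case $d = 1$, the algebra $A^1_n$ is the path algebra of the linear quiver $1 \to 2 \to \cdots \to n$, so its indecomposables are the interval modules $M_{[a,b]}$. Using the bijection $[a,b] \mapsto (n+1-b, n+3-a)$ between intervals and $\mathbf{I}_{n+2}^{1}$, the projectives $P_j = M_{[j,n]}$ map to pairs $(1, n+3-j)$ beginning with $1$, the injectives $I_i = M_{[1,i]}$ map to pairs $(n+1-i, n+2)$ ending with $n+2$, and the classical criterion $\mathrm{Hom}(M_{[a,b]}, M_{[a',b']}) \neq 0$ iff $a' \leq a \leq b' \leq b$ translates directly into the interleaving $i_0 - 1 < j_0 < i_1 - 1 < j_1$.

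For the inductive step, the construction of the quiver $Q^d_n$ indexes the indecomposable projective $A^d_n$-modules by $\mathbf{I}_{n+2d-2}^{d-1}$. The higher Auslander correspondence then identifies these projectives with the indecomposable objects of the $(d-1)$-cluster-tilting subcategory of $\mathrm{mod}(A^{d-1}_n)$, which by the inductive hypothesis already carries the stated combinatorial Hom description. The shift $(i_0,\ldots,i_{d-1}) \mapsto (1, i_0+2, \ldots, i_{d-1}+2)$ yields the projective characterisation in $\mathbf{I}_{n+2d}^{d}$, and the injective case follows dually via the Nakayama functor. By Theorem \ref{d-clus-tilt}, the remaining objects of $\mathcal{C}$ are reached by iterating $\tau_d^{-}$, and I would show that $\tau_d^{-}$ acts on indices as a deterministic combinatorial shift whose successive iterations, applied to each projective, sweep out all of $\mathbf{I}_{n+2d}^{d}$ and terminate at the injectives.

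The main obstacle is establishing the Hom formula for arbitrary pairs $M_I, M_J$. I would first compute $\mathrm{Hom}_{A^d_n}(P, M_J)$ for projective $P$ directly from paths in $Q^d_n$ modulo the relations $I^d_n$, verifying that the result coincides with the stated inequality on indices. I would then extend to general $M_I$ by propagating along $\tau_d$-orbits, using that the Auslander--Reiten structure of the $d$-cluster-tilting subcategory transports Hom spaces across $\tau_d^{-}$. The delicate point will be tracking how the interleaving condition $i_0 - 1 < j_0 < \cdots < i_d - 1 < j_d$ transforms under the combinatorial shift induced by $\tau_d^{-}$ and confirming compatibility with the inductive hypothesis; once this bookkeeping is completed, the three bullet points follow by induction.
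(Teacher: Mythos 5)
This statement is quoted in the paper as \cite[Theorem 3.6]{ot} and the paper itself gives no proof, so there is no internal argument to compare against; I will therefore assess your sketch on its own terms and against the strategy in the source.

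Your base case $d=1$ is correct: the bijection $[a,b]\mapsto (n+1-b,\,n+3-a)$ does carry interval modules to $\mathbf{I}_{n+2}^{1}$, sends $P_j=M_{[j,n]}$ to $(1,n+3-j)$ and $I_i=M_{[1,i]}$ to $(n+1-i,n+2)$, and the classical nonvanishing criterion $a'\leq a\leq b'\leq b$ translates, entry by entry, into $i_0-1<j_0<i_1-1<j_1$; I checked each of the three inequalities and they match. The inductive use of the higher Auslander correspondence is also the right move for the projective and injective cases: projectives of $A^d_n$ are $\mathrm{Hom}_{A^{d-1}_n}(N,N_I)$ for indecomposable summands $N_I$ of the $(d-1)$-cluster-tilting module, so they inherit both the indexing by $\mathbf{I}_{n+2d-2}^{d-1}$ and, by fullness of the correspondence, the Hom formula between projectives.

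The genuine gap is in the proposed extension of the Hom formula from projectives to all objects of $\mathcal{C}$. You write that you would ``extend to general $M_I$ by propagating along $\tau_d$-orbits, using that the Auslander--Reiten structure transports Hom spaces across $\tau_d^{-}$.'' This does not work as stated: $\tau_d^{-}$ is not an equivalence on $\mathrm{mod}(A^d_n)$ (it annihilates injectives) and it does not transport ordinary $\mathrm{Hom}$-spaces; what survives is a stable statement and the higher AR duality $D\,\mathrm{\underline{Hom}}(N,\tau_d M)\cong \mathrm{Ext}^d(M,N)$, which mixes $\mathrm{Hom}$ and $\mathrm{Ext}^d$ and drops the contribution of maps factoring through projectives. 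Recovering the full $\mathrm{Hom}$-dimension from these stable/duality data requires a separate bookkeeping argument that you do not supply, and it is precisely the hard part of the theorem. There is also a mild circularity in the preceding step: computing $\mathrm{Hom}(P,M_J)$ ``from paths in $Q^d_n$ modulo relations'' amounts to reading off the dimension vector of $M_J$, but $M_J$ has not yet been constructed at that point of the induction. The source \cite{ot} avoids both problems by constructing the modules $M_I$ explicitly (with prescribed support in $Q^d_n$) and verifying the Hom formula by direct linear algebra on those presentations, rather than by transporting along $\tau_d$-orbits; your inductive framework would become sound if, after identifying the projectives inductively, you similarly exhibited an explicit presentation for each $M_I$ with $I\in\mathbf{I}_{n+2d}^{d}$ and computed $\mathrm{Hom}(M_I,M_J)$ from it directly.
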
}
\jjm{Note that, comparing with \cite[Corollary 3.7]{ot}, the set of indecomposable projective $(A^d_n)$-modules determines a maximal-by-size collection of non-intertwining $\nnw{(d+1)}$-subsets of $[n+2d]$. \nnw{Moreover, one can verify that this collection forms a slice, in the sense of Definition \ref{spread}.}}

\jjm{To describe the structure of tensor products of $d$-representation-finite algebras, we need the following notion.}
\begin{definition}\jdm{\cite[Definition 3.2]{is}}
For a finite-dimensional algebra $\nnw{\Lambda}$, a module $M\in\mathrm{mod}(\nnw{\Lambda})$ is \emph{$d$-precluster tilting} if it satisfies the following conditions:
\begin{itemize}
\item The module $M$ is a generator-cogenerator for $\mathrm{mod}(\nnw{\Lambda})$. 
\item \nw{We have $\tau_d(M) \in \mathrm{add}(M)$ and $\tau^-_d(M)\in \mathrm{add}(M)$.}
\item There is an equality $\mathrm{Ext}^i_{\nnw{\Lambda}}(M,M)=0$ for all $0<i<d$.
\end{itemize}
\end{definition}

Alternatively, $M$ is called \jjm{a} $d$-ortho-symmetric module $M$ in \cite{chenko}. For a $d$-precluster-tilting module $M$, the subcategory $\mathrm{add}(M)\subseteq \mathrm{mod}(\nnw{\Lambda})$ is called a $d$-precluster-tilting subcategory.
If $\Lambda$ is a finite-dimensional algebra of global dimension at most $d$, then the \emph{$(d+1)$-preprojective algebra} $\Pi_{\nnw{d+1}}(\Lambda)$ is defined to be the tensor algebra $\Pi_{\nnw{d+1}}(\Lambda) = T_\Lambda \mathrm{Ext}^{d}_{\Lambda}(D\Lambda,\Lambda)$. 

\jdm{
\begin{remark}
The $3$-preprojective algebras are closely related to the relation extensions of  Assem, Br\"ustle and Schiffler \cite{abs}.
For  tilted algebras, the two constructions can be identified (see \cite[Remark 3.24]{amiotsurvey}, \cite[Proposition 4.7]{amiot2}, \cite[Proposition 5.2.1]{amiot1}). This is helpful, as quivers for relation extensions can be explicitly described. In particular, Assem, Br\"ustle and Schiffler show that there are extra arrows coming from relations \cite[Lemma 2.3]{abs}. In the case of preprojective algebras, this was considered in terms of mesh relations in \cite{gls07} (see also \cite[Section 4.3]{amiot2} and \cite[Theorem 6.12]{keller08}). 
\end{remark}}
For a given $d$-representation-finite algebra $\Lambda$, then the preprojective algebra $\Pi_{d+1}\nnw{(\Lambda)}$ is self injective by \cite[Corollary 3.4]{iostable} and has a canonical $d$-precluster-tilting subcategory: the additive subcategory of projective-injective modules.  
\begin{lemma}\cite[Lemma 2.13]{iostable}
Let $\Lambda$ be a $d$-representation-finite algebra and $\Gamma=\nnw{\Pi_{d+1}(\Lambda)}$. Then $${_\Lambda}\Gamma \nw{\cong} \bigoplus_{i\geq 0}\tau_d^{-i}(\Lambda)$$ \nw{as $\Lambda$-modules.}
\end{lemma}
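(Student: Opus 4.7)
The plan is to unpack the tensor-algebra description of the $(d+1)$-preprojective algebra and identify each graded piece with an iterated inverse $d$-Auslander--Reiten translate of $\Lambda$. Writing $E := \mathrm{Ext}^{d}_{\Lambda}(D\Lambda,\Lambda)$, the definition of the preprojective algebra immediately gives a decomposition of left $\Lambda$-modules
$$\Gamma = T_{\Lambda} E \;\cong\; \bigoplus_{i\geq 0} E^{\otimes_{\Lambda} i}, \qquad E^{\otimes_{\Lambda} 0} := \Lambda.$$
Thus it suffices to construct natural $\Lambda$-module isomorphisms $E^{\otimes_{\Lambda} i} \cong \tau_{d}^{-i}(\Lambda)$ for all $i \geq 0$.

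The base case is the identification $E \cong \tau_{d}^{-}(\Lambda)$. This follows from the defining formula $\tau_{d}^{-} = \tau^{-}\Omega^{-(d-1)}$ combined with the hypothesis $\mathrm{gl.dim}(\Lambda)\leq d$: take an injective coresolution of $\Lambda$ of length at most $d$, apply the classical Auslander--Reiten formula $\tau^{-} \cong \mathrm{Ext}^{1}_{\Lambda}(D\Lambda,-)$ to the terminal syzygy, and splice the resulting short exact sequences to identify the higher Ext group, yielding $\tau_{d}^{-}(\Lambda) \cong \mathrm{Ext}^{d}_{\Lambda}(D\Lambda,\Lambda) = E$. Equivalently, one may work derived: the inverse Nakayama $\nu^{-1} = \mathrm{RHom}_{\Lambda}(D\Lambda,-)$ is concentrated in cohomological degree $-d$ on $\Lambda$ (using $\mathrm{gl.dim}\leq d$), and $\tau_{d}^{-}$ is by definition its $H^{0}$ after a $[d]$-shift.

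For the inductive step I would use Theorem \ref{d-clus-tilt}: the modules $\tau_{d}^{-i}\Lambda$ all lie in the unique $d$-cluster-tilting subcategory $\mathrm{add}\,M$. For any $N \in \mathrm{add}\,M$, the derived identity $\mathrm{RHom}_{\Lambda}(D\Lambda,N) \cong \mathrm{RHom}_{\Lambda}(D\Lambda,\Lambda) \otimes_{\Lambda}^{\mathbb{L}} N$ collapses to an ordinary isomorphism
$$E \otimes_{\Lambda} N \;\cong\; \mathrm{Ext}^{d}_{\Lambda}(D\Lambda,N) \;\cong\; \tau_{d}^{-}(N),$$
because $\mathrm{RHom}_{\Lambda}(D\Lambda,\Lambda)$ is concentrated in a single degree and the Tor groups $\mathrm{Tor}^{\Lambda}_{j}(E,N)$ vanish for $0 < j < d$ on the $d$-cluster-tilting subcategory. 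Applying this with $N = \tau_{d}^{-(i-1)}\Lambda$ gives $E^{\otimes_{\Lambda} i} \cong E \otimes_{\Lambda} \tau_{d}^{-(i-1)}\Lambda \cong \tau_{d}^{-i}\Lambda$, completing the induction and hence the proof.

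The main obstacle is justifying the derived-to-underived collapse in the inductive step: one must ensure that the intermediate Tor terms vanish so that the derived tensor product represents a single module in the correct cohomological slot. This is exactly where the $d$-cluster-tilting hypothesis (via Theorem \ref{d-clus-tilt}) is essential, since it is the standard mechanism supplying the needed Ext/Tor vanishing in the range $0 < j < d$; without it the argument fails, as is visible from the formula reducing to $\tau^{-}$ only for arbitrary $M$ in the $d = 1$ (classical) case.
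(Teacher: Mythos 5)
The paper states this lemma purely as a citation to Iyama--Oppermann \cite[Lemma 2.13]{iostable} and gives no proof of its own, so there is no in-paper argument to compare against. What you have written is, however, essentially the standard argument behind that result: expand the tensor algebra degree by degree, identify $E = \mathrm{Ext}^{d}_{\Lambda}(D\Lambda,\Lambda)$ with $\tau_{d}^{-}\Lambda$ via the derived Nakayama functor, and iterate using $E \otimes_{\Lambda} N \cong \mathrm{Ext}^{d}_{\Lambda}(D\Lambda,N) \cong \tau_{d}^{-}(N)$. The overall structure and conclusion are correct.

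Two points in the justification of the inductive step are imprecise, though neither is fatal. First, $\mathrm{RHom}_{\Lambda}(D\Lambda,\Lambda)$ is \emph{not} concentrated in a single cohomological degree: it has $\mathrm{Hom}_{\Lambda}(D\Lambda,\Lambda)$ in degree $0$ (nonzero whenever projective--injective modules exist) as well as $E$ in degree $d$, and for a $d$-representation-finite algebra the degrees $0<j<d$ vanish; so the complex generally lives in two degrees, not one. Second, and more to the point, the Tor-vanishing on the $d$-cluster-tilting subcategory that you invoke is not needed to extract $H^{d}$. Model $\mathrm{RHom}_{\Lambda}(D\Lambda,\Lambda)$ by $K := \nu^{-1}(I^{\bullet})$ where $0 \to \Lambda \to I^{0} \to \cdots \to I^{d} \to 0$ is an injective coresolution of length at most $d$; then $K$ is a bounded complex of \emph{projective} bimodules in degrees $0,\dots,d$, and $H^{d}(K \otimes_{\Lambda} N) = \mathrm{cok}\bigl(K^{d-1}\otimes_{\Lambda}N \to K^{d}\otimes_{\Lambda}N\bigr) = H^{d}(K)\otimes_{\Lambda}N = E\otimes_{\Lambda}N$ by right exactness of $-\otimes_{\Lambda}N$ alone. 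The same complex simultaneously computes $\mathrm{RHom}_{\Lambda}(D\Lambda,N)$, so $E\otimes_{\Lambda}N \cong \mathrm{Ext}^{d}_{\Lambda}(D\Lambda,N) \cong \tau_{d}^{-}(N)$ holds for \emph{every} $\Lambda$-module $N$ once $\mathrm{gl.dim}\,\Lambda \leq d$; the $d$-cluster-tilting hypothesis is not what makes the collapse work (it mainly guarantees finiteness of the sum and the existence of the setting). Replacing your spectral-sequence/Tor-vanishing reasoning by this right-exactness observation closes the one soft spot in an otherwise correct proof.
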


The subcategory  $\mathrm{add}\{\tau_d^{-i}(\Lambda)\mid i\geq 0\}\subseteq\mathrm{mod}(\Lambda)$ is the unique $d$-cluster-tilting subcategory of $\mathrm{mod}(\Lambda)$ by Theorem 1.6 of \cite{iy2}. Thus the $d$-precluster-tilting subcategory of $\mathrm{mod}(\Pi_{d+1}(\Lambda))$ consisting of projective-injective modules can be thought of as having as objects the $\tau^-_d$-orbits of all $\Lambda$-modules.
\begin{lemma}\label{welldefined} 
Let $A$ and $B$ be $d$-representation-finite algebras and $\Lambda=A^\mathrm{op}\otimes B$. Let $\Gamma=\nnw{\Pi_{2d+1}(\Lambda)}$. Then 
$${_\Lambda}\Gamma\nw{\cong}\bigoplus_{i\geq 0}\tau_d^{-i}(A^\mathrm{op})\otimes \tau_d^{-i}(B)$$ as $\Lambda$-modules.
\end{lemma}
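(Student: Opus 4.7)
The plan is to unpack $\Gamma = \Pi_{2d+1}(\Lambda) = T_\Lambda \mathrm{Ext}^{2d}_\Lambda(D\Lambda,\Lambda)$ directly as a left $\Lambda$-module, decompose the generating bimodule and each of its tensor powers via a K\"unneth-type factorisation, and then identify the graded pieces using the preceding lemma applied to $A^\mathrm{op}$ and $B$ separately. The main observation is that the hypotheses $\mathrm{gl.dim}(A^\mathrm{op}),\mathrm{gl.dim}(B)\leq d$ conspire to force the K\"unneth formula for $\mathrm{Ext}^{2d}_\Lambda(D\Lambda,\Lambda)$ to collapse to a single summand.

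First I would compute $E := \mathrm{Ext}^{2d}_\Lambda(D\Lambda,\Lambda)$ as a $\Lambda$-bimodule. Using that $K$ is a field (so $K$-flatness is automatic) together with $D\Lambda \cong DA^\mathrm{op}\otimes DB$ and $\Lambda\cong A^\mathrm{op}\otimes B$, the K\"unneth formula for $\mathrm{Ext}$ over a tensor product of $K$-algebras yields
\[
E \;\cong\; \bigoplus_{p+q=2d} \mathrm{Ext}^p_{A^\mathrm{op}}(DA^\mathrm{op},A^\mathrm{op})\otimes \mathrm{Ext}^q_B(DB,B).
\]
Since $\mathrm{gl.dim}(A^\mathrm{op})\leq d$ and $\mathrm{gl.dim}(B)\leq d$, every summand with $p>d$ or $q>d$ vanishes, so only the $p=q=d$ term survives. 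Writing $E_A := \mathrm{Ext}^d_{A^\mathrm{op}}(DA^\mathrm{op},A^\mathrm{op})$ and $E_B := \mathrm{Ext}^d_B(DB,B)$, we obtain $E \cong E_A\otimes E_B$ as $\Lambda$-bimodules. Next I would iterate, using the basic identity
\[
(M_A \otimes M_B)\otimes_\Lambda (N_A \otimes N_B) \;\cong\; (M_A\otimes_{A^\mathrm{op}} N_A)\otimes (M_B\otimes_B N_B)
\]
valid for bimodules of the indicated product form, to conclude by induction on $n$ that $E^{\otimes_\Lambda n} \cong E_A^{\otimes_{A^\mathrm{op}} n}\otimes E_B^{\otimes_B n}$. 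Summing gives
\[
{_\Lambda}\Gamma \;\cong\; \bigoplus_{n\geq 0} E^{\otimes_\Lambda n} \;\cong\; \bigoplus_{n\geq 0} E_A^{\otimes_{A^\mathrm{op}} n}\otimes E_B^{\otimes_B n}.
\]

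Finally, since $A^\mathrm{op}$ and $B$ are both $d$-representation-finite, the immediately preceding lemma (\cite[Lemma 2.13]{iostable}) applies to each and identifies the degree-$n$ component $E_A^{\otimes_{A^\mathrm{op}} n}$ of $\Pi_{d+1}(A^\mathrm{op})$ with $\tau_d^{-n}(A^\mathrm{op})$ as a left $A^\mathrm{op}$-module, and similarly $E_B^{\otimes_B n}\cong \tau_d^{-n}(B)$ as left $B$-modules. Substituting into the display above yields the claimed description of ${_\Lambda}\Gamma$. The step that requires most care is verifying that the cited isomorphism respects the natural tensor-algebra grading on both sides, so that the $n$-th tensor power really corresponds to the $n$-th iterate of $\tau_d^{-}$; this is implicit in the construction of that isomorphism but is the only part of the argument that is not a formal manipulation. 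A secondary technicality is keeping track of the $\Lambda$-bimodule actions carefully throughout the K\"unneth and tensor-over-tensor identifications.
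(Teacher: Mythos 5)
Your proposal is correct and follows essentially the same route as the paper: both hinge on the same K\"unneth collapse $\mathrm{Ext}^{2d}_\Lambda(D\Lambda,\Lambda)\cong\mathrm{Ext}^d_{A^\mathrm{op}}(DA^\mathrm{op},A^\mathrm{op})\otimes\mathrm{Ext}^d_B(DB,B)$ forced by the global dimension bounds, and both then identify iterated tensor powers with iterates of $\tau_d^-$ via \cite[Lemma 2.13]{iostable}. The only presentational difference is that the paper works at the level of functors, establishing $\tau_d^-\otimes\tau_d^-\cong\mathrm{Ext}^{2d}_\Lambda(D\Lambda,\Lambda)\otimes_\Lambda-$ directly and leaving the evaluation on $\Lambda$ and the passage to the tensor algebra implicit, whereas you unpack the argument at the bimodule level, tracking each graded piece $E^{\otimes_\Lambda n}\cong E_A^{\otimes_{A^\mathrm{op}} n}\otimes E_B^{\otimes_B n}$ explicitly; the caveat you flag about grading compatibility is indeed the one point glossed over in both treatments, and it holds because $\tau_d^-$ on a $d$-representation-finite algebra is literally $\mathrm{Ext}^d(D\cdot,\cdot)\otimes-$, so degree additivity is built in.
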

\begin{proof}
Since $\mathrm{gl.dim}(A)\leq d$ and $\mathrm{gl.dim}(B)\leq d$, we have 
\begin{align*}\tau_d^-\otimes \tau_d^-&=\tau^-\Omega^{1-d}\otimes \tau^-\Omega^{1-d}\\
&=\mathrm{Ext}^1_{A^\mathrm{op}}(D(A^\mathrm{op}),\Omega^{1-d}-)\otimes \mathrm{Ext}^1_{B}(D(B),\Omega^{1-d}-)\\
&=\mathrm{Ext}^d_{A^\mathrm{op}}(D(A^\mathrm{op}),-)\otimes \mathrm{Ext}^d_B(DB,-)\\
&\cong\mathrm{Ext}^{2d}_\Lambda(D(\Lambda),-)\\
&=\mathrm{Ext}^{2d}_\Lambda(D(\Lambda),\Lambda)\otimes _\Lambda-.
\end{align*}
\end{proof}
Now we may prove the main result of this section.
\begin{theorem}\label{precl}
Let $A$ and $B$ be $d$-representation-finite algebras and $\Lambda=A^\mathrm{op}\otimes B$. Let $\Gamma=\nnw{\Pi_{2d+1}(\Lambda)}$. Then there exists a $d$-precluster-tilting subcategory $\mathcal{D}\subseteq\mathrm{mod}(\Gamma)$.
\end{theorem}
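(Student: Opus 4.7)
The plan is to identify $\mathcal{D}$ as the additive closure of a module $M$ obtained by ``off-diagonalising'' the $\Lambda$-module decomposition of $\Gamma$ given in Lemma \ref{welldefined}. By the Herschend--Iyama tensor product theorem for $d$-representation-finite algebras, $\Lambda = A^{\mathrm{op}} \otimes B$ is $2d$-representation-finite with $2d$-cluster-tilting module $N_{A^{\mathrm{op}}} \otimes N_B$, where $N_{A^{\mathrm{op}}} = \bigoplus_{i \geq 0} \tau_d^{-i}(A^{\mathrm{op}})$ and $N_B = \bigoplus_{j \geq 0} \tau_d^{-j}(B)$; hence $\Gamma$ is self-injective by \cite[Corollary 3.4]{iostable}. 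The diagonal decomposition of Lemma \ref{welldefined} only recovers the $2d$-precluster-tilting subcategory of projective-injective $\Gamma$-modules. To obtain a strictly larger $d$-precluster-tilting subcategory, I would take
$$M := \bigoplus_{i, j \geq 0} \tau_d^{-i}(A^{\mathrm{op}}) \otimes \tau_d^{-j}(B),$$
initially as a $\Lambda$-module (finite up to isomorphism of summands by $d$-representation-finiteness of each factor), and then promote it to a $\Gamma$-module by letting $\mathrm{Ext}_\Lambda^{2d}(D\Lambda, \Lambda) \cong \mathrm{Ext}_{A^{\mathrm{op}}}^d(DA^{\mathrm{op}}, A^{\mathrm{op}}) \otimes \mathrm{Ext}_B^d(DB, B)$ act factor-wise through the $\tau_d^-$-maps on the two tensor slots. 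Set $\mathcal{D} := \mathrm{add}_\Gamma(M)$.

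Two of the three defining conditions should be comparatively routine. The generator-cogenerator property follows because the diagonal $i = j$ part of $M$ already recovers $\Gamma$ as a $\Gamma$-module, so $M$ is a generator; self-injectivity of $\Gamma$ then promotes generators to cogenerators. Closure of $\mathcal{D}$ under $\tau_d^\Gamma$ and its inverse should follow by computing these translations directly on the summands of $M$, combining the self-injective description of $\tau_d^\Gamma$ in terms of syzygies and the Nakayama functor with the factor-wise identity $\tau_{2d}^\Lambda(X \otimes Y) = \tau_d^{A^{\mathrm{op}}}(X) \otimes \tau_d^B(Y)$ that underlies the proof of Lemma \ref{welldefined}.

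The main obstacle is the Ext-vanishing condition $\mathrm{Ext}_\Gamma^k(M, M) = 0$ for $0 < k < d$. My plan is to reduce to $\Lambda$-level computations via a change-of-rings argument (exploiting that $\Gamma = T_\Lambda E$ with $E = \mathrm{Ext}_\Lambda^{2d}(D\Lambda, \Lambda)$ concentrated in a single homological degree $2d$), and then invoke the tensor-product K\"unneth isomorphism
$$\mathrm{Ext}_\Lambda^k(X \otimes Y, X' \otimes Y') \cong \bigoplus_{a + b = k} \mathrm{Ext}_{A^{\mathrm{op}}}^a(X, X') \otimes \mathrm{Ext}_B^b(Y, Y').$$
By $d$-cluster-tiltedness of $N_{A^{\mathrm{op}}}$ and $N_B$, their indecomposable summands satisfy $\mathrm{Ext}_{A^{\mathrm{op}}}^a = 0$ and $\mathrm{Ext}_B^b = 0$ in the respective ranges $0 < a, b < d$; for $0 < k < d$, every decomposition $a + b = k$ forces at least one of $a$ or $b$ to lie strictly between $0$ and $d$, so each K\"unneth summand vanishes and $\mathrm{Ext}_\Lambda^k(M, M) = 0$ in this range. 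The technically delicate step is confirming that the change-of-rings comparison does not introduce extra surviving classes from the tensor-algebra grading of $\Gamma$ in the range $0 < k < d$; concentration of $E$ in degree $2d$ is what should keep the spectral sequence trivial in this range, and establishing this compatibility is where I expect the heart of the proof to lie.
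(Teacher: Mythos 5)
Your construction of $\mathcal{D}$ as $\mathrm{add}_\Gamma(M)$ with $M = \bigoplus_{i,j \geq 0} \tau_d^{-i}(A^{\mathrm{op}}) \otimes \tau_d^{-j}(B)$, the $\Gamma$-structure coming from Lemma \ref{welldefined}, is exactly the paper's $\mathcal{D}$: the paper defines it as the category of $(\tau_d \otimes \tau_d)$-orbits of $\mathcal{C} \otimes \mathcal{C}'$, which by Theorem \ref{d-clus-tilt} is $\mathrm{add}$ of precisely your $M$. Your K\"unneth argument for the Ext-vanishing is also the paper's argument, stated somewhat more explicitly.

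There is, however, a genuine gap in your proposal: the invocation of the Herschend--Iyama tensor-product theorem. That theorem only guarantees that $A^{\mathrm{op}} \otimes B$ is $2d$-representation-finite when both factors are $\ell$-homogeneous (equivalently, twisted $\left(\tfrac{d\ell}{\ell}\right)$-fractionally Calabi--Yau) for a common $\ell$. Theorem \ref{precl} has no such hypothesis, and without it $\Lambda$ need not be $2d$-representation-finite, so you cannot conclude from \cite[Corollary~3.4]{iostable} that $\Gamma$ is self-injective. This matters because you rely on self-injectivity twice: to promote the generator $M$ to a cogenerator, and to compute $\tau_d^\Gamma$ via syzygies together with the Nakayama functor. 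The paper avoids both uses. For the generator-cogenerator property it argues directly that $\mathcal{D}$ contains every injective $\Gamma$-module (the orbit of $D(A^{\mathrm{op}}) \otimes DB$), not just the projectives. For closure under $\tau_d^{\pm}$ it picks, in each $(\tau_d \otimes \tau_d)$-orbit, a representative $M \otimes J$ with $J$ injective, takes a minimal injective coresolution of $M$ in $\mathrm{mod}(B)$, tensors with $J$ to obtain one in $\mathrm{mod}(\Gamma)$, and reads off $\tau_d^{\Gamma,-}(M \otimes J) = \tau_d^-(M) \otimes \nu^-(J)$, which stays in $\mathcal{D}$ --- no self-injectivity needed. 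You should replace your appeal to Herschend--Iyama with arguments of this kind, or else explicitly add an $\ell$-homogeneity hypothesis and accept a weaker theorem. On the remaining delicate point you flag --- justifying that $\mathrm{Ext}_\Gamma^k$ in degrees $0<k<d$ sees only $\Lambda$-level K\"unneth classes, with no contributions from the tensor-algebra grading in weight $\geq 1$ --- the paper is equally terse, so you have correctly located where additional care is needed; concentration of $E = \mathrm{Ext}_\Lambda^{2d}(D\Lambda,\Lambda)$ in homological degree $2d > d$ is indeed what makes that work.
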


\begin{proof}
Let $\mathcal{C}\subseteq\mathrm{mod}(A^\mathrm{op})$ and $\mathcal{C}^\prime\subseteq \mathrm{mod}(B)$ be $d$-cluster-tilting subcategories.
Define the subcategory $\mathcal{D}\subseteq\mathrm{mod}(\Gamma)$ as having objects the $(\tau_d\otimes\tau_d)$-orbits of $\mathcal{C}\otimes \mathcal{C}^\prime$, which is well defined by Lemma \ref{welldefined}. 
By construction, the subcategory $\mathcal{D}$ contains all projective and injective $\Gamma$-modules, and is hence a generator-cogenerator for $\mathrm{mod}(\Gamma)$.  

\jm{By choosing a representative of its $(\tau_d\otimes \tau_d)$-orbit}, any object $X\in\mathcal{D}$ must be expressible as either $I\otimes N$ or $M\otimes J$, where $I,M\in\mathcal{C}$ \jm{and} $J,N\in\mathcal{C}^\prime$ and \jm{such that} $I$ (respectively $J$) is injective as an $A^\mathrm{op}$-module (respectively $B$-module.) So assume $X=M\otimes J$. Then the minimal injective resolution of $M$ as \jjm{a} $B$-module: 
$$0\rightarrow M\rightarrow I_0\rightarrow I_1\rightarrow \cdots \rightarrow I_d$$ induces a minimal injective resolution of $M\otimes J$ as \jjm{a} \nnw{$\Gamma$}-module:
$$0\rightarrow M\otimes J \rightarrow I_0\otimes J\rightarrow I_1\otimes J \rightarrow I_d\otimes J$$
and hence the functor $\tau_d^-$ in the category $\mathrm{mod}(\Gamma)$ sends the module $M\otimes J$ to $\tau_d^-(M)\otimes \nu^- (J)$, where each is considered as \nw{a representative} of its respective $(\tau_d\otimes\tau_d)$-orbit. Likewise $\tau_d^-(I\otimes N)=\nu^- (I)\otimes \tau_d^-(N)$. Hence $\tau_d^-(\mathcal{D})\subseteq \mathcal{D}$ and, dually,  $\tau_d(\mathcal{D})\subseteq \mathcal{D}$.

Finally, suppose there are $X,Y\in\mathcal{D}$ and an integer $0<i<d$ such that $$\mathrm{Ext}^i_{\Gamma}(X,Y)\ne0.$$ \jm{Let $X=M\otimes M^\prime$ and $Y=N \otimes N^\prime$ be representatives of their respective  $(\tau_d\otimes\tau_d)$-orbits}, there must be integers $i_A+i_B=i$, \jjm{where $0<i_A<d$ and $0<i_B<d$}, such that  $\mathrm{Ext}^{i_A}_{A^\mathrm{op}}(M,N)\ne0$ and $\mathrm{Ext}^{i_B}_{B}(M^\prime,N^\prime)\ne0$. This is a contradiction.  So $\mathrm{Ext}^i_{\Gamma}(\mathcal{D},\mathcal{D})=0$ for all $0<i<d$.
\end{proof}

\begin{example}\label{ex-precl}
Let $\Lambda=A_2^2\otimes A_2^2 $. Then $\Pi_{5}(\Lambda)$ has quiver (where we omit relations):
$$\begin{tikzpicture} [scale=1.5]
\draw [->] (4.,0.2) -- (4.,1.8);
\draw [->] (4.,2.2) -- (4.,3.8);
\draw [->] (4.3,0.) -- (5.5,0.);
\draw [->] (6.,0.2) -- (6.,1.8);
\draw [->] (4.3,2.) -- (5.5,2.);
\draw [->] (6.,2.2) -- (6.,3.8);
\draw [->] (4.3,4.) -- (5.5,4.);
\draw [->] (6.5,0.) -- (7.7,0.);
\draw [->] (8.,0.2) -- (8.,1.8);
\draw [->] (6.5,2.) -- (7.7,2.);
\draw [->] (8.,2.2) -- (8.,3.8);
\draw [->] (6.5,4.) -- (7.7,4.);
\draw [->] (7.8,3.8) to [bend left= 25] (4.2,0.2);

\node(A) at (3.75,-0) {\small{$13\otimes 13$}};
\node(B) at (3.75,2){\small{$14\otimes 13$}};
\node(C) at (3.75,4) {\small{$15\otimes 13$}};
\node(D) at(6,-0) {\small{$13\otimes 14$}};
\node(E) at(6,2){\small{$14\otimes 14$}};
\node(F) at(6,4.) {\small{$14\otimes 13$}};
\node(G) at(8.25,0.) {\small{$13\otimes 15$}};
\node(H) at(8.25,2.) {\small{$14\otimes 15$}};
\node(I) at(8.25,4.) {\small{$15\otimes 15$}};

\end{tikzpicture}$$
The $2$-precluster-tilting subcategory $\mathcal{C}\subseteq \mathrm{mod}(\Pi_{5}(\Lambda))$ can be \nnw{described as follows}.

$$\begin{tikzpicture}[x=1.75cm,y=1.0cm,scale=1.5]
\draw [->] (2+4.,-6 .3+0.6+2) -- (2+4.,-5 .3+2);
\draw [->] (2+4.,-5 .3+0.6+2) -- (2+4.,-4 .3+2);
\draw [->] (2+4.,-4 .3+0.6+2) -- (2+4.,-3 .3+2);
\draw [->] (2+5.,-6.3+0.6+2) -- (2+5.,-5.3+2);
\draw [->] (2+5.,-5.3+0.6+2) -- (2+5.,-4.3+2);
\draw [->] (2+5.,-4.3+0.6+2) -- (2+5.,-3.3+2);
\draw [->] (2+6.,-6.3+0.6+2) -- (2+6.,-5.3+2);
\draw [->] (2+6.,-5.3+0.6+2) -- (2+6.,-4.3+2);
\draw [->] (2+6.,-4.3+0.6+2) -- (2+6.,-3.3+2);
\draw [->] (2+7.,-6.3+0.6+2) -- (2+7.,-5.3+2);
\draw [->] (2+7.,-5.3+0.6+2) -- (2+7.,-4.3+2);
\draw [->] (2+7.,-4.3+0.6+2) -- (2+7.,-3.3+2);
\draw [->] (2+7.,-3.3+0.6+2) -- (2+7.,-2.3+2);
\draw [->] (2+7.,-2.3+0.6+2) -- (2+7.,-1.3+2);
\draw [->] (2+8.,-3.3+0.6+2) -- (2+8.,-2.3+2);
\draw [->] (2+8.,-2.3+0.6+2) -- (2+8.,-1.3+2);
\draw [->] (2+9.,-3.3+0.6+2) -- (2+9.,-2.3+2);
\draw [->] (2+9.,-2.3+0.6+2) -- (2+9.,-1.3+2);

\draw [->] (2+4.4,-6.+2) -- (2+5.3-0.7,-6.+2);
\draw [->] (2+5.4,-6.+2) -- (2+6.3-0.7,-6.+2);
\draw [->] (2+6.4,-6.+2) -- (2+7.3-0.7,-6.+2);
\draw [->] (2+4.4,-5.+2) -- (2+5.3-0.7,-5.+2);
\draw [->] (2+5.4,-5.+2) -- (2+6.3-0.7,-5.+2);
\draw [->] (2+6.4,-5.+2) -- (2+7.3-0.7,-5.+2);
\draw [->] (2+4.4,-4.+2) -- (2+5.3-0.7,-4.+2);
\draw [->] (2+6.4,-4.+2) -- (2+7.3-0.7,-4.+2);
\draw [->] (2+5.4,-4.+2) -- (2+6.3-0.7,-4.+2);
\draw [->] (2+4.4,-3.+2) -- (2+5.3-0.7,-3.+2);
\draw [->] (2+5.4,-3.+2) -- (2+6.3-0.7,-3.+2);
\draw [->] (2+6.4,-3.+2) -- (2+7.3-0.7,-3.+2);
\draw [->] (2+7.4,-3.+2) -- (2+8.3-0.7,-3.+2);
\draw [->] (2+8.4,-3.+2) -- (2+9.3-0.7,-3.+2);
\draw [->] (2+7.4,-2.+2) -- (2+8.3-0.7,-2.+2);
\draw [->] (2+8.4,-2.+2) -- (2+9.3-0.7,-2.+2);
\draw [->] (2+7.4,-1.+2) -- (2+8.3-0.7,-1.+2);
\draw [->] (2+8.4,-1.+2) -- (2+9.3-0.7,-1.+2);

\begin{scriptsize}{\tiny}

\node(2+A+2) at (2+4.,-6.+2) {$M_{135}\otimes M_{135}$};
\node(2+B+2) at (2+4.,-5.+2) {$M_{135}\otimes M_{136}$};
\node(2+C+2) at (2+4.,-4.+2) {$M_{135}\otimes M_{146}$};
\node(2+D+2) at (2+4.,-3.+2) {$M_{135}\otimes M_{246}$};
\node(2+E+2) at (2+5.,-6.+2) {$M_{136}\otimes M_{135}$};
\node(2+F+2) at (2+5.,-5.+2) {$M_{136}\otimes M_{136}$};
\node(2+G+2) at (2+5.,-4.+2) {$M_{136}\otimes M_{246}$};
\node(2+H+2) at (2+5.,-3.+2) {$M_{136}\otimes M_{246}$};
\node(2+I+2) at (2+6.,-6.+2) {$M_{146}\otimes M_{135}$};
\node(2+J+2) at (2+6.,-5.+2) {$M_{146}\otimes M_{136}$};
\node(2+K+2) at (2+6.,-4.+2) {$M_{146}\otimes M_{146}$};
\node(2+L+2) at (2+6.,-3.+2) {$M_{146}\otimes M_{246}$};
\node(2+m+2) at (2+7.,-6.+2) {$M_{246}\otimes M_{135}$};
\node(2+n+2) at (2+7.,-5.+2) {$M_{246}\otimes M_{136}$};
\node(2+o+2) at (2+7.,-4.+2) {$M_{246}\otimes M_{146}$};
\node(2+p+2) at (2+7.,-3.+2) {$M_{246}\otimes M_{246}$};
\node(2+q+2) at (2+7.,-2.+2) {$M_{135}\otimes M_{136}$};
\node(2+r+2) at (2+7.,-1.+2) {$M_{135}\otimes M_{146}$};
\node(2+s+2) at (2+8.,-3.+2) {$M_{136}\otimes M_{135}$};
\node(2+t+2) at (2+8.,-2.+2) {$M_{136}\otimes M_{136}$};
\node(2+u+2) at (2+8.,-1.+2) {$M_{136}\otimes M_{146}$};
\node(2+v+2) at (2+9.,-3.+2) {$M_{146}\otimes M_{135}$};
\node(2+w+2) at (2+9.,-2.+2) {$M_{146}\otimes M_{136}$};
\node(2+x+2) at (2+9.,-1.+2) {$M_{146}\otimes M_{146}$};
\end{scriptsize}

\end{tikzpicture}$$
\end{example}

\section{Mutations}

Mutations of non-$l$-intertwining collections are a generalisation of both mutations of cyclic polytope triangulations and mutations of non-crossing collections. It is unfortunately already true in both classical cases that mutation is not always possible. The inherited lack of mutatability makes a criterion for mutatability for non-$l$-intertwining collections difficult. In this section we rather outline how mutations are expected to work.

\subsection{Higher APR tilting}
In this section we establish combinatorics on non-$l$-intertwining collections that are compatible with mutation of \nw{tilting modules in} \nnw{$d$}-precluster-tilting subcategories.
\jjm{
\begin{definition}\label{tiltdef}
Let $\Lambda$ be a finite-dimensional algebra.
An $\Lambda$-module $T$ is a \emph{$d$-tilting module} \cite{ha,miya} if:
\begin{enumerate}
\item $\mathrm{proj.dim}(T)\leq d$.\label{tilt1}
\item $\mathrm{Ext}^i_\Lambda(T,T)=0$ for all $0<i\leq d$.\label{tilt2}
\item if there exists an exact sequence $$0\rightarrow \Lambda\rightarrow T_0\rightarrow T_1\rightarrow \cdots \rightarrow T_d\rightarrow 0$$ 
where $T_0,\ldots, T_d\in \mathrm{add}(T)$.\label{tilt3}
\end{enumerate}
\end{definition}}
Since non-$l$-intertwining collections are generalisations of clusters in the cluster structure Grassmannian coordinate ring, we wish to have a notion of mutation between them. 

Let $\Lambda$ be a $d$-representation-finite algebra and $\mathcal{C}\subseteq \mathrm{mod}(\Lambda)$ a $d$-cluster-tilting subcategory. Let $i$ be a sink in the quiver of $\Lambda$, let $P_i$ be the corresponding simple projective $\Lambda$-module and decompose $\Lambda=P_i\oplus Q$. Then
$$T:=(\tau_d^-P_i)\oplus Q$$ is the \emph{$d$-APR tilting module} associated with $P_i$, and $\mathrm{End}_\Lambda(T)^\mathrm{op}$ is the \emph{$d$-APR tilt of $\Lambda$} associated with $P_i$ \cite[Definition 3.1]{io}. 
Higher representation finiteness is preserved by higher APR tilts, as the following result shows.

\begin{theorem}\cite[Theorem 4.2]{io}\label{dAPR}
Let $\Lambda$ be a $d$-representation-finite algebra and let \jm{$\Lambda^\prime$} be a $d$-APR tilt of $\Lambda$. Then \jm{$\Lambda^\prime$} is $d$-representation finite.
\end{theorem}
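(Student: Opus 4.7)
The plan is to verify the two defining properties of $d$-representation finiteness for $\Lambda'$: namely that $\mathrm{gl.dim}(\Lambda') \leq d$ and that $\mathrm{mod}(\Lambda')$ admits a $d$-cluster-tilting module. The uniform tool for both is the derived equivalence induced by the $d$-tilting module $T = (\tau_d^- P_i) \oplus Q$, so I would first spend effort establishing that $T$ really is a $d$-tilting module in the sense of Definition \ref{tiltdef}.

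First, I would check the tilting conditions for $T$. Since $\mathrm{gl.dim}(\Lambda) \leq d$, condition (\ref{tilt1}) is automatic. For condition (\ref{tilt2}), the only non-obvious Ext groups to control are $\mathrm{Ext}^j_\Lambda(\tau_d^- P_i, Q)$ and $\mathrm{Ext}^j_\Lambda(Q, \tau_d^- P_i)$ for $0 < j \leq d$. Because $i$ is a sink, $P_i$ is simple projective, and $\tau_d^- P_i$ lies in the unique $d$-cluster-tilting subcategory $\mathrm{add}(M) = \mathrm{add}\{\tau_d^{-i}\Lambda \mid i \in \mathbb{N}\}$ of Theorem \ref{d-clus-tilt}; the required vanishings then reduce to the defining Ext-orthogonality of the $d$-cluster-tilting subcategory, together with the higher Auslander--Reiten formula $\mathrm{Ext}^d_\Lambda(X, \tau_d Y) \cong D\overline{\mathrm{Hom}}_\Lambda(Y, X)$. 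For condition (\ref{tilt3}), I would splice the minimal projective resolution of $\tau_d^- P_i$ (which has length at most $d$ since $\mathrm{gl.dim}(\Lambda)\leq d$) with the identity on $Q$ to produce the length-$d$ tilting sequence starting at $\Lambda$.

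Next, classical Miyashita tilting theory applied to $T$ gives a derived equivalence
\[
F := \mathrm{RHom}_\Lambda(T, -) \colon D^b(\mathrm{mod}(\Lambda)) \xrightarrow{\sim} D^b(\mathrm{mod}(\Lambda')),
\]
with quasi-inverse $- \otimes^{\mathbb{L}}_{\Lambda'} T$. The bound $\mathrm{gl.dim}(\Lambda') \leq d$ then follows from the length-$d$ tilting resolution of condition (\ref{tilt3}): applying $\mathrm{Hom}_\Lambda(-, T)$ to it yields a projective resolution of $D\Lambda'$ of length $d$ over $\Lambda'$, which is enough for $\mathrm{gl.dim}(\Lambda') \leq d$ once combined with standard homological algebra for tilted algebras.

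Finally, I would transfer the $d$-cluster-tilting subcategory across $F$. Let $M$ be the unique $d$-cluster-tilting module of $\mathrm{mod}(\Lambda)$ from Theorem \ref{d-clus-tilt}. One checks that $F(M)$ is concentrated in a single cohomological degree (using the Ext-vanishing of $T$ against each summand $\tau_d^{-i}\Lambda$ of $M$), so that $M' := H^0(F(M))$, up to shift, is a well-defined $\Lambda'$-module, and that the Ext-orthogonality characterisation of $\mathrm{add}(M)$ is preserved by $F$. This gives $\mathrm{add}(M')$ as a $d$-cluster-tilting subcategory of $\mathrm{mod}(\Lambda')$, proving that $\Lambda'$ is $d$-representation finite.

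The hard part will be the Ext-vanishing in the verification that $T$ is a $d$-tilting module: one needs the precise interaction between $\tau_d^-$ and the simple projective $P_i$ at a sink, and in particular that no $\mathrm{Ext}^j_\Lambda(\tau_d^- P_i, Q)$ sneaks in for intermediate $j$. A conceptually cleaner route, which I would use if the direct computation became unwieldy, is to pass to the $(d+1)$-preprojective algebra $\Pi_{d+1}(\Lambda)$ and its cluster category: there $\tau_d^-$ lifts to a triangle autoequivalence and $d$-APR tilting corresponds to a mutation, which trivialises the Ext-orthogonality checks.
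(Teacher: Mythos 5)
The strategy you outline --- verify that $T$ is a $d$-tilting module, pass across the induced derived equivalence, and transport the $d$-cluster-tilting subcategory --- is indeed the one underlying Iyama and Oppermann's proof of this result (which the paper under review only cites, so there is no in-text proof to compare against). However, two steps of your argument contain genuine gaps. First, the global dimension bound: applying $\mathrm{Hom}_\Lambda(-,T)$ to the coresolution $0 \to \Lambda \to T_0 \to \cdots \to T_d \to 0$ gives a projective resolution of the right $\Lambda'$-module $T_{\Lambda'}$ of length $d$, \emph{not} of $D\Lambda'$; those are different modules, and from generic $d$-tilting theory one only obtains $\mathrm{gl.dim}(\Lambda') \leq \mathrm{gl.dim}(\Lambda) + \mathrm{proj.dim}(T) \leq 2d$. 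The sharper bound $\leq d$ is precisely where the special structure of the $d$-APR tilt enters, and your argument as stated does not yield it.

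Second, the tilting coresolution in condition (\ref{tilt3}) is not obtained by a routine splice. What is needed is an exact sequence $0 \to P_i \to Q_{d-1} \to \cdots \to Q_0 \to \tau_d^- P_i \to 0$ with all middle terms in $\mathrm{add}(Q)$ (i.e.\ avoiding $P_i$), and this requires the hypothesis that $i$ is a sink and $P_i$ is simple projective; it is the substantive content of the relevant lemma in \cite{io} and should not be treated as automatic. Relatedly, when transporting the $d$-cluster-tilting subcategory of $D^b(\Lambda)$ across $F$, you get a $d$-cluster-tilting subcategory of $D^b(\Lambda')$ that visibly contains $F(\Lambda) = T$; to conclude $d$-representation finiteness of $\Lambda'$ you need it to coincide with $\mathrm{add}\{\nu_d^j \Lambda' \mid j \in \mathbb{Z}\}$, i.e.\ that $\Lambda'$ and $T$ generate the same $\nu_d$-orbits. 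This again uses the specific form of the APR tilting module (its two summands sitting in adjacent $\nu_d$-orbits of $\Lambda$) and is not a purely formal transfer. You correctly flag the $\mathrm{Ext}$-vanishing as delicate, but these two points are at least as essential and are where a direct write-up of the argument would need the most care.
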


\begin{definition}
Let $A$ and $B$ be $d$-representation-finite algebras and $\Lambda=A^\mathrm{op}\otimes B$, with $\Gamma=\nnw{\Pi_{2d+1}(\Lambda)}$. \jm{We consider each $\Lambda$-module $M\in\mathcal{C}\otimes \mathcal{C^\prime}$ as a representative of its $(\tau_{d} \otimes \tau_{d})$-orbit in $\mathrm{mod}(\Gamma)$ as in Theorem \ref{precl}.}
Given a sink $i$ in $A$ and sink $j$ in $B$, there is \jm{a} simple projective $B$-module $Be_j$. Let $P=e_iA\otimes Be_j$ and decompose $\Lambda=P\oplus Q$.
The module $$\jm{_\Gamma}T:=(e_iA\otimes \tau_d^-(Be_j))\oplus Q$$ is the  \emph{$d$-APR cotilting-tilting module} \jm{associated with $P$}.
Dually, there is corresponding simple \nw{injective} $\nw{A^{\mathrm{op}}}$-module $\nw{D(Ae_i)}$. Let $I=D(Ae_i)\otimes D(e_jB)$ and decompose $\nw{D\Lambda}=I\oplus J$.
The module $$\nw{_\Gamma}C:=(\nw{\tau_d(D(Ae_i))\otimes D(e_jB))\oplus J}$$ is the \emph{$d$-APR tilting-cotilting module} \jm{associated with $I$}.
\end{definition} 

\jjm{\begin{lemma}\label{onapr}
Let $A$ and $B$ be $d$-representation-finite algebras and $\Lambda=A^\mathrm{op}\otimes B$, with $\Gamma=\Pi_{2d+1}(\Lambda)$.
Let $T$ be the $d$-APR cotilting-tilting $\Gamma$ module associated with $e_iA\otimes Be_j$. Then $T$ is a $d$-tilting $\Gamma$-module.
\end{lemma}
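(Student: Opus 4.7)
The plan is to verify the three conditions of Definition \ref{tiltdef} for $T$ as a $\Gamma$-module, by first establishing the corresponding statements for $T$ as a $\Lambda$-module and then lifting. The essential input is the classical $d$-APR tilting result applied to $B$: since $j$ is a sink in $B$ and $B$ is $d$-representation-finite, Theorem \ref{dAPR} gives that $T_B := \tau_d^{-}(Be_j) \oplus Q_B$, where $B = Be_j \oplus Q_B$, is a $d$-tilting $B$-module. This provides an exact sequence $0 \to Be_j \to T_0^B \to \cdots \to T_d^B \to 0$ with $T_k^B \in \mathrm{add}(T_B)$, the vanishing $\mathrm{Ext}^k_B(T_B, T_B) = 0$ for $0 < k \leq d$, and the bound $\mathrm{proj.dim}_B(\tau_d^{-}(Be_j)) \leq d$.

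To verify that $T$ is $d$-tilting over $\Lambda = A^{\mathrm{op}} \otimes B$, I would decompose $T = (e_iA \otimes T_B) \oplus \bigoplus_{i' \neq i} (e_{i'}A \otimes B)$ and note that every direct summand of $T$ other than $e_iA \otimes \tau_d^{-}(Be_j)$ is $\Lambda$-projective. Combining the K\"unneth-type identity
\[\mathrm{Ext}^k_\Lambda(X \otimes Y, X' \otimes Y') \cong \bigoplus_{p+q=k} \mathrm{Ext}^p_{A^{\mathrm{op}}}(X, X') \otimes \mathrm{Ext}^q_B(Y, Y')\]
with the projectivity of $e_iA$ over $A^{\mathrm{op}}$ collapses the projective-dimension and Ext-vanishing conditions for $T$ over $\Lambda$ into the corresponding statements about $T_B$ over $B$ noted above. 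Tensoring the $T_B$-resolution of $Be_j$ with $e_iA$ and combining with the trivial resolutions of the remaining projective summands yields an exact sequence $0 \to \Lambda \to T_0 \to \cdots \to T_d \to 0$ with every $T_k \in \mathrm{add}(T)$, so $T$ is $d$-tilting over $\Lambda$.

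Finally I would lift these conclusions from $\Lambda$ to $\Gamma = \Pi_{2d+1}(\Lambda)$ using the orbit-description of $\mathcal{D}$ from Theorem \ref{precl} together with Lemma \ref{welldefined}: each indecomposable projective $\Gamma$-module is, as a $\Lambda$-module, a direct sum over the $(\tau_d \otimes \tau_d)$-orbit of an indecomposable projective $\Lambda$-module, and each object of $\mathcal{D}$ is represented by such an orbit. The $d$-precluster-tilting property of $\mathcal{D}$ already yields $\mathrm{Ext}^k_\Gamma(T,T) = 0$ for $0 < k < d$, leaving only the case $k = d$, which I would handle via the representative argument used in the proof of Theorem \ref{precl}, where projective or injective resolutions of an orbit representative are constructed by tensoring one factor with the corresponding resolution of the other factor. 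The resolution of $\Gamma$ in $\mathrm{add}(T)$ is then obtained by applying the same orbit-lifting to the $\Lambda$-resolution constructed above. The main obstacle will be matching the $\Lambda$-resolution compatibly across the $\tau_d^{-}$-orbit structure: a single $\Gamma$-module in $\mathcal{D}$ has infinitely many $\Lambda$-module representatives, and one must check that the connecting maps in the resolution respect the tensor-algebra action of $\Gamma$ on these representatives.
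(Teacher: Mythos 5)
Your proposal follows essentially the same strategy as the paper's: invoke the $d$-APR tilting exact sequence for $Be_j$ over $B$, tensor with $e_iA$, use a K\"unneth-style Ext identity, and note that only $\mathrm{Ext}^d_\Gamma(T,T)=0$ needs checking because the precluster-tilting property already gives vanishing in degrees $0<k<d$. The genuine difference is in packaging: you propose first to verify tilting over $\Lambda$ and then lift to $\Gamma$, which is precisely what generates the orbit-matching obstacle you flag at the end. The paper avoids the intermediate $\Lambda$-stage entirely by observing that every middle term $T_k$ of the $B$-resolution lies in $\mathrm{add}(B)$, so $e_iA\otimes T_k \in \mathrm{add}(\Gamma)\cap\mathrm{add}(T)$ is already a projective $\Gamma$-module, and the tensored sequence is read off directly as a sequence in $\mathrm{mod}(\Gamma)$; this is what makes conditions (1) and (3) of Definition \ref{tiltdef} immediate, with no separate lifting argument. (Also, a minor point: since $A$ and $B$ are $d$-representation-finite, each $(\tau_d\otimes\tau_d)$-orbit is finite, not infinite as you wrote.) Your argument would close cleanly if, rather than claiming tilting over $\Lambda$ and then lifting, you simply noted that the tensored resolution already consists of projective $\Gamma$-modules in the middle terms, as in the paper.
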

\begin{proof} Let $B= Be_{j}\oplus B'$ as a $B$-module. Since $B(1-e_j)\oplus \tau_d^-(Be_j)$ is $d$-APR tilting, there is an exact sequence in $\mathrm{mod}(B)$:
$$0 \rightarrow Be_{j} \rightarrow T_{1} \rightarrow \dots \rightarrow T_{d} \rightarrow \tau_{d}^-(Be_{j}) \rightarrow 0.$$
such that $T_i\in\mathrm{add}(B(1-e_j)\oplus \tau_d^-(Be_j)\nnw{)}\cap\mathrm{add}(B)$ for all $1\leq i\leq d$.
This induces an exact sequence in $\mathrm{mod}(\Gamma)$:
$$0 \rightarrow e_{i}A \otimes Be_{j} \rightarrow e_{i}A \otimes T_{1} \rightarrow \dots \rightarrow e_{i}A \otimes T_{d} \rightarrow e_{i}A \otimes \tau_{d}^-(Be_{j}) \rightarrow 0$$
 such that $e_iA\otimes T_i\in\mathrm{add}(T)\cap\mathrm{add}(\Gamma)$ for all $1\leq i\leq d$. It then follows that $_{\Gamma}T$ satisfies axioms (\ref{tilt1}) and (\ref{tilt3}) of Definition \ref{tiltdef}. Finally, \begin{align*}
 \mathrm{Ext}^d_\Gamma(T,T)&\cong \mathrm{Ext}^d_\Gamma(e_iA\otimes \tau_d^-(Be_j),T)=0,\end{align*}
since \begin{align*}
\mathrm{Ext}^d_B(\tau_d^-(Be_j),B(1-e_j)\oplus \tau_d^-(Be_j))=0.
 \end{align*}
\end{proof}
}
Denote by $D^{b}(\nnw{\Lambda})$ the bounded derived category of $\mathrm{mod}(\nnw{\Lambda})$, and by $$\nu:= D\nnw{\Lambda}\otimes_{\nnw{\Lambda}}^\mathbf{L}-\cong D\mathbf{R}\mathrm{Hom}_{\nnw{\Lambda}}(-,\nnw{\Lambda}):D^b(\nnw{\Lambda})\rightarrow D^b(\nnw{\Lambda})$$
the Nakayama functor in $D^b(\nnw{\Lambda})$. Further denote by $\nu_d:=\nu[-d]$ the $d$-th desuspension of $\nu$. Note that $\tau_d=H_0(\nu_d)$ (see for example Section 2.2 of \cite{io}).
\begin{lemma}\label{boring}
Let $i$ be a sink in $A$ and $j$ a sink in $B$. Let $T$ be the \nw{corresponding} $d$-APR cotilting-tilting $\Gamma$-module and let $C$ be the \nw{corresponding} $d$-APR tilting-cotilting $\nw{\Gamma}$-module. Then, considering $T$ \nw{and $C$ as complexes in $D^b(\Gamma)$} we have an isomorphism
$$\mathrm{End}_{D^b(\Gamma)}(T)^\mathrm{op}\cong \mathrm{End}_{D^b(\nw{\Gamma})}(C)^{\nw{\mathrm{op}}}.$$
\end{lemma}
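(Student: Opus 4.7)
The goal is to relate $T$ and $C$ via a derived functor on $D^{b}(\Gamma)$ that preserves endomorphism algebras. The natural candidate is the Nakayama functor $\nu = D\mathbf{R}\mathrm{Hom}_{\Gamma}(-, \Gamma)$, which restricts to an equivalence $K^{b}(\mathrm{proj}\,\Gamma) \to K^{b}(\mathrm{inj}\,\Gamma)$; consequently, applied to complexes admitting finite projective resolutions, it induces isomorphisms of endomorphism algebras up to opposites.

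First, I would decompose $T = T_{0} \oplus Q$ and $C = C_{0} \oplus J$, where $T_{0} = e_{i}A \otimes \tau_{d}^{-}(Be_{j})$ and $C_{0} = \tau_{d}(D(Ae_{i})) \otimes D(e_{j}B)$, while the complementary summands $Q \subseteq \Lambda$ and $J \subseteq D\Lambda$ correspond under the identification of Theorem \ref{precl} to the projective and injective $\Gamma$-modules at the vertices $(i', j') \neq (i, j)$. These two collections match under the Nakayama identification $\nu(e_{i'}A \otimes Be_{j'}) \cong D(Ae_{i'}) \otimes D(e_{j'}B)$ on projective-injective $\Gamma$-modules coming from projective $\Lambda$-modules.

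The crucial step is the identification $\nu_{d}(T_{0}) \cong C_{0}$ in $D^{b}(\Gamma)$. To this end, I would use the length-$d$ projective resolution of $T_{0}$ produced in the proof of Lemma \ref{onapr},
\[
0 \to e_{i}A \otimes Be_{j} \to e_{i}A \otimes T_{1} \to \dots \to e_{i}A \otimes T_{d} \to T_{0} \to 0,
\]
whose left-hand and interior terms are projective $\Gamma$-modules. Applying $\nu$ termwise yields a bounded complex of injective $\Gamma$-modules; after the $[-d]$-shift in the definition of $\nu_{d}$, the surviving cohomology coincides with $C_{0}$, using the explicit action of $\nu$ on projective tensor products together with the compatibility of $\tau_{d}$ with the Nakayama duality on each tensor factor separately. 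Combined with the match $\nu(Q) \cong J$, one obtains $\nu_{d}(T) \cong C$ in $D^{b}(\Gamma)$ (up to an overall shift that cancels in the endomorphism computation), whence
\[
\mathrm{End}_{D^{b}(\Gamma)}(T)^{\mathrm{op}} \cong \mathrm{End}_{D^{b}(\Gamma)}(\nu_{d}T)^{\mathrm{op}} \cong \mathrm{End}_{D^{b}(\Gamma)}(C)^{\mathrm{op}}.
\]

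The main obstacle is the explicit cohomology computation $\nu_{d}(T_{0}) \cong C_{0}$: it requires careful bookkeeping of the Nakayama functor through the tensor product structure, correct placement of the $[-d]$-shift, and verification that the resulting complex is concentrated in the intended degree. Reconciling the apparent shift mismatch between the contribution of the projective summand $Q$ and the homologically nontrivial contribution of $T_{0}$ is the most delicate part of the argument.
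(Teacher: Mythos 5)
There is a genuine gap in the step that assembles the two summand computations. You claim $\nu_d(T_0)\cong C_0$ and $\nu(Q)\cong J$ and then conclude $\nu_d(T)\cong C$ ``up to an overall shift''. But $\nu_d(T)=\nu_d(T_0)\oplus\nu_d(Q)$, and $\nu_d(Q)=\nu(Q)[-d]\cong J[-d]$, so what you actually obtain is $\nu_d(T)\cong C_0\oplus J[-d]$, whereas $C=C_0\oplus J$. This is \emph{not} an overall shift: the shift hits the $Q$-summand alone while the $T_0$-summand (if your claim $\nu_d(T_0)\cong C_0$ is correct) stays in degree $0$. The endomorphism algebras of $C_0\oplus J[-d]$ and $C_0\oplus J$ in $D^{b}(\Gamma)$ need not agree, because the cross-terms $\mathrm{Hom}_{D^b(\Gamma)}(C_0,J)$ and $\mathrm{Hom}_{D^b(\Gamma)}(C_0,J[-d])$ are different $\mathrm{Ext}$-groups. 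So the final display of your argument does not follow.

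This is exactly the obstacle the paper's proof is designed to sidestep. Rather than applying $\nu_d$ directly to $T$, the paper introduces an auxiliary complex
\[
T' := \bigl(\nu_d(e_iA)\otimes Be_j\bigr)\oplus Q,
\]
which has the same complementary summand $Q$ as $T$ (so no asymmetric shift arises there), and then applies the bare Nakayama auto-equivalence $\nu$ — not $\nu_d$ — to $T'$. This sends the projective $Q$ to the injective $J$ in degree $0$ and the first summand to $\nu_d(D(Ae_i))\otimes D(e_jB)$, giving $\nu(T')=C$ on the nose and hence $\mathrm{End}_{D^b(\Gamma)}(T')^{\mathrm{op}}\cong\mathrm{End}_{D^b(\Gamma)}(C)^{\mathrm{op}}$. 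The remaining comparison between $T$ and $T'$ is then handled by a different, non-functorial mechanism: $T$ and $T'$ lie in the same $(\nu_d\otimes\nu_d)$-orbit, and the precluster-tilting construction of Theorem~\ref{precl} identifies representatives within such orbits, so their endomorphism rings agree. Your proposal conflates these two distinct steps into one application of $\nu_d$, which is where the degree bookkeeping breaks down. A secondary concern is that the assertion $\nu_d(T_0)\cong C_0$ in $D^b(\Gamma)$, which you flag as the most delicate part, is itself unproved: it requires $\nu(T_0)$ to be concentrated in a single cohomological degree, i.e.\ $\mathrm{Ext}^i_\Gamma(T_0,\Gamma)=0$ for $i\neq d$, and this is not established.
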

\begin{proof}
As a complex we have $T=(e_iA\otimes \nu_d^-(Be_j))\oplus Q$. Likewise, as a complex we have $C:=(\nu_d(\nw{D}(Ae_i))\otimes \nw{D}(e_jB))\oplus J$. 
Define  $$T^\prime:=(\nu_d(e_iA)\otimes Be_j)\oplus Q.$$
Then $\nu(T')=C$ \jjm{and} $$\mathrm{End}_{D^b(\nw{\Gamma})}(C)^{\nw{\mathrm{op}}}\cong \mathrm{End}_{D^b(\Gamma)}(T^\prime)^\mathrm{op},$$ \nw{since $\nu$ is an auto-equivalence.} Since $T^\prime$ is in the same $\jjm{(}\nu_d\otimes \nu_d\jjm{)}$-orbit as $T$, we further have
$$\mathrm{End}_{D^b(\Gamma)}(T^\prime)^\mathrm{op}\cong \mathrm{End}_{D^b(\Gamma)}(T)^\mathrm{op}.$$ This completes the proof.
\end{proof}

Let $A$ and $B$ be $d$-representation-finite algebras and $\Lambda=A^\mathrm{op}\otimes B$, with $\Gamma=\Pi_{2d+1}(\Lambda)$. For a $d$-APR cotilting-tilting $\Gamma$-module $T$, we do not presently show whether there is a $d$-precluster-tilting subcategory 
$$\mathcal{D}^\prime \subseteq\mathrm{mod}(\mathrm{End}_{D^b(\Gamma)}(T)^\mathrm{op}).$$ One might suspect that similar methods to those used in \cite[Section 5]{ot} apply. One hindrance is determining whether $(d+2)$-angulated categories can be constructed.

\subsection{Uniterated mutation of non-$l$-intertwining collections}

Triangulations of cyclic polytopes can be mutated. In terms of the associated non-intertwining collections, two such collections are said to be a mutation of one other if they differ by a single element.
 For convenience, we write $I+1$ for $\{ i+1\nw{\pmod{n}} \mid i \in I \}$ and use $I-1$ similarly. 

\begin{definition}\label{def-mut}
Given a non-$l$-intertwining collection $\mathcal{C} \subseteq \subs{n}{k}{ l}$ with $|\mathcal{C}| = \binom{k-1}{l-1}\binom{\nw{n-k-1}}{l-1}$ and $I \in \mathcal{C}$, we define 
$$\mu_{I}^{+}(\mathcal{C}) := \mathcal{C} \setminus \{ I \} \cup \{ I+1 \},$$
$$\mu_{I}^{-}(\mathcal{C}) = \mathcal{C} \setminus \{ I \} \cup \{ I-1 \},$$
\nw{provided that these collections are non-$l$-intertwining.}
\end{definition}

\begin{example}\label{ex-bad-mut}
We continue from Examples \ref{ex-spread} and \ref{ex-construction}. We have $\mathcal{C} = \mathcal{C}_{4}(\mathcal{T}_1) = \{ 1356, 1346, 1367, 1347, 1246, 1467, 1247, 1457, 1257 \}$. Then:
\begin{align*}
\mu_{1356}^{+}(\mathcal{C}) = \{ 2467, 1346, 1367, 1246, 1467, 1247, 1457, 1257 \}
\end{align*}
We may also mutate $\mathcal{C}$ in a different way to Definition \ref{def-mut} by replacing the $4$-tuple $\nnw{1246}$ with the 4-tuple $\nnw{1357}$ to obtain a non-$l$-intertwining collection. However this case lies beyond the scope of our paper, since $\nnw{1357}$ is not contained in  $\subs{\nw{8}}{4}{3}$ \nnw{and the mutation is not of the form in Definition \ref{def-mut}.}
\end{example}




\begin{proposition}\label{prop-mut}
Let $\mathcal{T}$ be a non-intertwining collection of $\binom{\nw{n-l-1}}{l-1}$ elements in $\nonconsec{n}{l-1}$ that is a slice and $l \leq k \leq \nnw{n-l}$. Let $T \in \mathcal{T}$ such that $\mu_{T}^{+}(\mathcal{T})$ is a \nnw{slice contained in} \jjm{$\nonconsec{n}{l-1}$}. Then there is a sequence of mutations from $\mathcal{C}_{k}(\mathcal{T})$ to $\mathcal{C}_{k}(\mu_{T}^+\mathcal{T})$.
\end{proposition}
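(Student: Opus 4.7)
The plan is to exhibit an explicit sequence of one-step mutations that transforms $\mathcal{C}_k(\mathcal{T})$ into $\mathcal{C}_k(\mu_T^+(\mathcal{T}))$. Since $\mu_T^+(\mathcal{T})$ differs from $\mathcal{T}$ only by the substitution $T \leftrightarrow T+1$, the collections $\mathcal{C}_k(\mathcal{T})$ and $\mathcal{C}_k(\mu_T^+(\mathcal{T}))$ agree on all elements whose beginning-point tuple is neither $T$ nor $T+1$. Writing $T = (t_1, \ldots, t_l)$ and $p_i = t_{i+1}-t_i-2$ (cyclically), the elements of $\mathcal{C}_k(\mathcal{T})$ with $\widehat{I} = T$ are parametrised by tuples $(q_1, \ldots, q_l) \in \mathbb{Z}_{\geq 0}^l$ satisfying $\sum q_i = k-l$ and $q_i \leq p_i$, and the shift $I \mapsto I+1$ yields a natural bijection with the new elements of $\mathcal{C}_k(\mu_T^+(\mathcal{T}))$, parametrised by the same tuples. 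I would enumerate these $m$ elements as $I_1, \ldots, I_m$ and apply $\mu_{I_j}^+$ in order, writing $\mathcal{C}_s$ for the collection after $s$ mutations.

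The core of the argument is to verify that every $\mathcal{C}_s$ is non-$l$-intertwining, which reduces to checking that the newly introduced element $I_s+1$ does not $l$-intertwine with any other member of $\mathcal{C}_{s-1}$. For pairs whose beginning-point tuples are both different from $T$ and $T+1$, non-$l$-intertwining is inherited from $\mathcal{C}_k(\mathcal{T})$. For pairs of already-mutated elements $I_r+1$ and $I_s+1$, both lie in $\mathcal{C}_k(\mu_T^+(\mathcal{T}))$, and Theorem \ref{maintheorem} applies since $\mu_T^+(\mathcal{T})$ is a slice by hypothesis. For $I_s+1$ paired with an unshifted $J$ satisfying $\widehat{J} \notin \{T, T+1\}$, I would invoke Lemma \ref{int-endpoints-lemma} together with the fact that $T+1$ and $\widehat{J}$ do not intertwine in $\mu_T^+(\mathcal{T})$.

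The main obstacle lies in the case of $I_s+1$ against an as-yet-unmutated $I_r$ with $r > s$. Here $\widehat{I_s+1} = T+1$ and $\widehat{I_r} = T$ do intertwine, because $T \in \nonconsec{n}{l-1}$ guarantees $t_i+1 < t_{i+1}$ for every $i$, so by Lemma \ref{int-endpoints-lemma} I must show that the endpoint tuples $\widecheck{I_s+1} = (t_i+1+q_i)_i$ and $\widecheck{I_r} = (t_i+q_i')_i$ do not intertwine. Since each of these tuples places exactly one element in every cyclic slot $(t_i, t_{i+1})$, any intertwining would require the slot-by-slot comparison to be uniform in direction: either $q_i' \leq q_i$ for all $i$, or $q_i' \geq q_i + 2$ for all $i$. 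Combined with $\sum q_i = \sum q_i' = k-l$, the first forces $q_i = q_i'$ and hence $I_s = I_r$, contradicting distinctness, while the second yields the impossibility $k-l \geq k+l$. Pinning down this cyclic-slot analysis carefully is the crux of the proof; once it is settled, the case-by-case verifications above show each $\mathcal{C}_s$ is non-$l$-intertwining, and after $m$ mutations one arrives at $\mathcal{C}_k(\mu_T^+(\mathcal{T}))$.
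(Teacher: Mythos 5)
Your proposal is correct and follows essentially the same strategy as the paper: both use Lemma~\ref{int-endpoints-lemma} and the interval parametrization together with the constraint $\sum a_i=\sum b_i=k-l$ to deduce that the only $l$-intertwining pair between $\{I:\widehat{I}=T\}$ and $\{J:\widehat{J}=T+1\}$ is $(I,I+1)$, so the mutations can be performed one after another. You spell out more of the bookkeeping for the intermediate collections (the cases involving already-mutated versus not-yet-mutated elements), but the crux argument is the same as in the paper.
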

\begin{proof}
Let $T = \{ t_{1}, \dots ,t_{l} \}$. By Lemma \ref{int-endpoints-lemma}, two elements $I\in\mathcal{C}_k(\mathcal{T})$ and $J\in\mathcal{C}_k(\mu_{T}^{+}\mathcal{T})$ are $l$-intertwining if and only if $\widehat{J}=T=\widehat{I}+1$. 
But then one must have
\begin{align*}
I &= [t_{1},t_{1}+a_{1}] \dots [t_{l},t_{l}+a_{l}] \\
J &= [t_{1}+1,t_{1}+1+b_{1}] \dots [t_{l}+1,t_{l}+1+b_{l}]
\end{align*}
for some $a_{i}, b_{i} \geq 0$ such that $\sum a_{i} = \sum b_{i} = k-l$. If $b_{j} < a_{j}$ for some $j$, then the $j$-th interval of $J$ contains the $j$-th interval of $I_{s}$, and so the two cannot be $l$-intertwining. Hence $J=I+1$. This means that one can mutate from $\mathcal{C}_{k}(\mathcal{T})$ to $\mathcal{C}_{k}(\mu_{T}^{+}\mathcal{T})$ using mutations \jjm{one after the other} at those $I \in C_{k}(\mathcal{T})$ such that $\widehat{I}=T$. 
\end{proof}

Now we study how mutation compares with \nnw{$d$}-precluster-tilting subcategories.
 Given an $l$-ple interval
$$I =[t_{1}, t'_{1}] \sqcup \dots \sqcup [t_{i}, t'_{i}] \sqcup \dots \sqcup [t_{l}, t'_{l}]\jjm{,} $$ \jjm{w}e define 
$$v_{i}(I):= [t_{1}, t'_{1}] \sqcup \dots \sqcup [t_{i-1}, t'_{i-1}] \sqcup [t_{i}+1, t'_{i}+1] \sqcup [t_{i+1}, t'_{i+1}] \sqcup \dots \sqcup [t_{l}, t'_{l}].$$
Likewise we define 
$$h_{i}(I): = [t_{1}, t'_{1}] \sqcup \dots \sqcup [t_{i-1}, t'_{i-1}-1] \sqcup [t_{i}-1, t'_{i}] \sqcup \dots \sqcup [t_{l}, t'_{l}]\jjm{.}$$
Note that $h_{i}$ is the inverse of the operation $v_{i}$ on the complement of $I$. \nw{Note further that if $t'_{i}=t_{i+1}-1$ then applying $v_{i}$ no longer gives an $l$-ple interval. Similarly, if $t_{i-1}=t'_{i-1}$, then applying $h_{i}$ does not give an $l$-ple interval.}

\begin{proposition}\label{quiv-index}
Let $\mathcal{T}$ be the non-intertwining collection of $l$-subsets of $[n]$ labelling the \jjm{indecomposable projective $(A^{l-1}_{n-2l+1})$-modules} for the higher Auslander algebra $A^{l-1}_{\nw{n-2l+1}}$. Then
\begin{itemize}
\item  The vertices of the algebra $(A^{l-1}_{k-l+1})^{\mathrm{op}}\otimes A^{l-1}_{\nw{n-k-l+1}}$ are indexed by $\mathcal{C}_k(\mathcal{T})$. 
\item The arrows of the quiver of $(A^{l-1}_{k-l+1})^{\mathrm{op}}\otimes A^{l-1}_{\nw{n-k-l+1}}$ are given by $v$ and $h$.
\end{itemize} 
\end{proposition}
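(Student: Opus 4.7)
The plan is to combinatorially encode each element of $\mathcal{C}_{k}(\mathcal{T})$ by a pair of compositions, pull these back through the combinatorial description of the higher Auslander quivers, and then verify that the operations $v$ and $h$ encode the two natural types of arrows in the tensor product. I would first invoke the bijection $\psi : \mathcal{S} \to \mathcal{C}_{k}(\mathcal{T})$ from the proof of Theorem \ref{maintheorem}. Since $\mathcal{T}$ is the slice labelling the indecomposable projective $(A^{l-1}_{n-2l+1})$-modules, every element of $\mathcal{T}$ begins with $1$ by Theorem \ref{oppthom2}. Hence each $l$-ple interval $I = [t_{1},t'_{1}] \sqcup \cdots \sqcup [t_{l},t'_{l}]$ in $\mathcal{C}_{k}(\mathcal{T})$ is encoded uniquely by a pair of compositions $(q_{1},\ldots,q_{l})$ with $q_{i} = t'_{i} - t_{i}$ and $(r_{1},\ldots,r_{l})$ with $r_{i}$ the gap between the $i$-th and $(i+1)$-th interval (cyclically for $r_{l}$), satisfying $\sum q_{i} = k - l$ and $\sum r_{i} = n - k - l$.

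Next I would identify each of these compositions with a vertex of the relevant higher Auslander quiver. By stars-and-bars, compositions of $k - l$ into $l$ nonnegative parts biject with $(l-1)$-subsets of $[k-1]$; shifting the $j$-th entry by $j - 1$ identifies these further with elements of $\mathbf{I}_{k+l-3}^{l-2}$, which by Construction \ref{ARdefn} is precisely the vertex set of the quiver $Q^{l-1}_{k-l+1}$ of $A^{l-1}_{k-l+1}$. The same recipe identifies compositions of $n - k - l$ into $l$ nonnegative parts with the vertices of $Q^{l-1}_{n-k-l+1}$. Since the vertex set of the tensor product quiver is the Cartesian product of the two vertex sets, this establishes the vertex bijection with $\mathcal{C}_{k}(\mathcal{T})$, matching the count $\binom{k-1}{l-1}\binom{n-k-1}{l-1}$ from Theorem \ref{maintheorem}.

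For the arrows, an arrow $\alpha_{i}(I) : I \to (I \setminus \{i\}) \cup \{i+1\}$ of $Q^{l-1}_{k-l+1}$, when $i$ occupies the $j$-th slot of $I$, translates under the above bijection to the operation $(q_{1},\ldots,q_{l}) \mapsto (q_{1},\ldots,q_{j}+1,q_{j+1}-1,\ldots,q_{l})$. Reversing direction for the opposite algebra and pushing back through $\psi$ to the $l$-ple interval decreases both $t'_{j}$ and $t_{j+1}$ by $1$, which is exactly the definition of $h_{j+1}$. Dually, an arrow of $Q^{l-1}_{n-k-l+1}$ increments $r_{j}$ and decrements $r_{j+1}$, corresponding to shifting $t_{j+1}$ and $t'_{j+1}$ up by $1$, i.e.\ applying $v_{j+1}$. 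Arrows in the tensor product quiver are by definition the union of arrows coming from each factor, completing the identification.

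I expect the main obstacle to be bookkeeping rather than conceptual: one must juggle two sign reversals (the opposite algebra on the first factor, and the fact that $h$ shifts indices \emph{down} whereas the quiver arrow $\alpha_{i}$ moves $i$ upward to $i+1$) and check boundary cases. Specifically, $v_{i}$ (respectively $h_{i}$) fails to produce an $l$-ple interval exactly when a gap is $0$ or an interval has a single element; this must match the condition that $\alpha_{i}(I) = 0$ in the quiver of $A^{l-1}_{k-l+1}$ (or $A^{l-1}_{n-k-l+1}$), which, as in Construction \ref{ARdefn}, occurs precisely when the would-be target leaves $\mathbf{I}_{k+l-3}^{l-2}$ (or $\mathbf{I}_{n-k+l-3}^{l-2}$).
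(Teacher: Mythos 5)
Your proposal follows essentially the same route as the paper: invoke the bijection $\psi\colon\mathcal{S}\to\mathcal{C}_k(\mathcal{T})$ from Theorem \ref{maintheorem}, identify each factor of $\mathcal{S}$ with the vertex set of the corresponding higher Auslander quiver via Construction \ref{ARdefn}, and check that $v$ governs the arrows that change only $\underline{r}$ while $h$ (being $v$ acting on complements) governs the arrows that change only $\underline{q}$. You simply make explicit the stars-and-bars re-indexing and the sign-flip coming from the opposite algebra, which the paper leaves as ``as in the proof of Theorem \ref{maintheorem}'' and ``by Construction \ref{ARdefn}.''
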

\begin{proof}
For $k=l$ this is precisely Construction \ref{ARdefn}. 
As in the  proof of Theorem \ref{maintheorem}, there is a bijection between $\mathcal{C}_k(\mathcal{T})$  and the set \[\mathcal{S}:= \{ ((q_{1}, \dots , q_{l}),(r_{1}, \dots , r_{l})) \in \mathbb{Z}_{\geq 0}^{l}\times \mathbb{Z}_{\geq 0}^{l} \mid \sum_{i=1}^{l}q_{i}=k-l, \sum_{i=1}^{l}r_{i}=\nw{n-k-l} \}.\]  
\nw{As in the proof of Theorem \ref{maintheorem}} this means the vertices of the algebra $(A^{l-1}_{k-l+1})^{\mathrm{op}}\otimes A^{l-1}_{\nw{n-k-l+1}}$ are indexed by $\mathcal{C}_{k}(\mathcal{T})$.  
By Construction \ref{ARdefn}, for each given $\underline{q}$, the arrows from $(\underline{q},\underline{r}_1)$ to $(\underline{q},\underline{r}_2)$ are determined by $v$. However $h$ is simply inverse of $v$ on the complements, and hence describes the arrows from $(\underline{q}_1,\underline{r})$ to $(\underline{q}_2,\underline{r})$ for each $\underline{r}$.
\end{proof}

Now let $\Lambda_{n,k}^{l-1}:= (A^{l-1}_{k-l+1})^{\mathrm{op}}\otimes A^{l-1}_{n-k-l+1}$ and $\Gamma_{n,k}^{l-1}:=\Pi_{2d+1}(\Lambda_{n,k}^{l-1})$. Let $\mathcal{D}_{n,k}^{l-1} \subseteq \mathrm{mod}(\Gamma_{n,k}^{l-1})$ be the $(l-1)$-precluster-tilting subcategory from Theorem \ref{precl}. We denote the indecomposables of this category by $\mathrm{ind}\jjm{(}\mathcal{D}_{n,k}^{l-1}\jjm{)}$.

\begin{lemma}\label{cat-desc}
\jjm{Each element $M\in\mathrm{ind}(\mathcal{D}_{n,k}^{l-1})$ has a label $\iota(M)\in\subs{n}{k}{l}$, such that irreducible morphisms in $\mathcal{D}_{n,k}^{l-1}$ are determined by $h$ and $v$. }
\end{lemma}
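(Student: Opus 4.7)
The plan is to combine the identification of the quiver of $\Lambda_{n,k}^{l-1}$ from Proposition \ref{quiv-index} with the $(\tau_d \otimes \tau_d)$-orbit description of $\mathcal{D}_{n,k}^{l-1}$ given in the proof of Theorem \ref{precl}. Set $d = l-1$ throughout. By that proof, every indecomposable $M \in \mathrm{ind}(\mathcal{D}_{n,k}^{l-1})$ lies in a unique $(\tau_d \otimes \tau_d)$-orbit, and each such orbit contains a unique indecomposable projective $\Gamma_{n,k}^{l-1}$-module $P_M$ (namely, a representative of the form $I \otimes J$ with $I$ and $J$ both projective in their respective tensor factors). The indecomposable projectives are in bijection with the vertices of the quiver of $\Lambda_{n,k}^{l-1}$, which by Proposition \ref{quiv-index} are labelled by $\mathcal{C}_k(\mathcal{T}) \subseteq \subs{n}{k}{l}$. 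I define $\iota(M)$ to be the label of $P_M$.

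Next I verify the claim about irreducible morphisms at the level of projectives. Let $\mathcal{C} \subseteq \mathrm{mod}((A^{l-1}_{k-l+1})^{\mathrm{op}})$ and $\mathcal{C}' \subseteq \mathrm{mod}(A^{l-1}_{n-k-l+1})$ be the unique $d$-cluster-tilting subcategories (Theorem \ref{d-clus-tilt}). Indecomposables in $\mathcal{C} \otimes \mathcal{C}'$ are tensor products $X \otimes Y$, and irreducible morphisms between them have the form $f \otimes \mathrm{id}$ or $\mathrm{id} \otimes g$ with $f$, $g$ irreducible in the respective factors. By Theorem \ref{oppthom2}, irreducible morphisms in $\mathcal{C}$ and $\mathcal{C}'$ correspond to incrementing a single coordinate of the labelling tuple. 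Tracing through the bijection of Proposition \ref{quiv-index}, an irreducible morphism of type $f \otimes \mathrm{id}$ corresponds to some $v_i$ applied to $\iota(P)$, while one of type $\mathrm{id} \otimes g$ corresponds to some $h_i$, since $h$ is the inverse of $v$ on complements and $\mathcal{C}'$ parametrises the complement data.

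Finally, I extend to arbitrary indecomposables of $\mathcal{D}_{n,k}^{l-1}$. The functor $\tau_d \otimes \tau_d$ is an auto-equivalence on $\mathcal{C} \otimes \mathcal{C}'$ which descends to an auto-equivalence of $\mathcal{D}_{n,k}^{l-1}$ that fixes the labels $\iota$ by construction. Thus any irreducible morphism $M \to N$ in $\mathcal{D}_{n,k}^{l-1}$ is, after applying a suitable power of $\tau_d \otimes \tau_d$, translated to an irreducible morphism between chosen projective representatives, which by the previous step is described by $v$ or $h$.

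The principal obstacle will be accounting for the irreducible morphisms in $\mathcal{D}_{n,k}^{l-1}$ which come from the extra arrows of $\Gamma_{n,k}^{l-1} = \Pi_{2d+1}(\Lambda_{n,k}^{l-1})$ generated by the $\mathrm{Ext}^{2d}_{\Lambda}(D\Lambda, \Lambda)$ term and not present in $\Lambda_{n,k}^{l-1}$ itself. These can be checked in the worked Example \ref{ex-precl} to correspond to the $v$ and $h$ operations interpreted cyclically on the index set $[n]$, consistent with the wraparound already implicit in the cyclic ordering conventions of Section 2; verifying this in general requires a careful comparison of the Nakayama action on injective summands with the shift operations defining $v$ and $h$, using $\mathrm{Ext}^{2d}_{\Lambda}(D\Lambda,\Lambda) \cong \mathrm{Ext}^{d}_{A^{\mathrm{op}}}(DA^{\mathrm{op}}, A^{\mathrm{op}}) \otimes \mathrm{Ext}^{d}_B(DB, B)$ as in Lemma \ref{welldefined}.
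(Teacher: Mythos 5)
There is a genuine gap in your definition of $\iota$. You assert that each $(\tau_d \otimes \tau_d)$-orbit in $\mathcal{C} \otimes \mathcal{C}'$ contains a unique indecomposable projective $P_M$ of the form $I \otimes J$ with both factors projective, and you set $\iota(M)$ equal to the label of $P_M$. This is false in general: by Lemma \ref{welldefined} the orbits containing a projective pair are precisely those corresponding to indecomposable projective $\Gamma$-modules, and these form a proper subset of $\mathrm{ind}(\mathcal{D}_{n,k}^{l-1})$. For a concrete counterexample in the setting of Example \ref{ex-precl} (where $\Lambda = (A_2^2)^{\mathrm{op}} \otimes A_2^2$ and $d=2$), take a pair $I \otimes J$ with $I$ projective but not injective in its tensor factor and $J$ injective but not projective in its factor. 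Then $\tau_d^-\otimes\tau_d^-$ annihilates $J$ and $\tau_d\otimes\tau_d$ annihilates $I$, so the orbit is the singleton $\{I\otimes J\}$, which contains no projective pair; $\iota$ is undefined there. The same issue invalidates your final step: no power of $\tau_d \otimes \tau_d$ translates such an object to a projective one, so the proposed reduction of an arbitrary irreducible morphism to one between projectives does not go through.

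The paper avoids this by labelling the projectives via Proposition \ref{quiv-index} and then \emph{propagating} the label along the two single-factor translations, declaring $\iota((\tau_{l-1}^{-}\otimes 1)M)=\iota(M)-1$ and $\iota((1\otimes \tau_{l-1}^{-})M)=\iota(M)+1$, following \cite[Proposition 3.13]{ot}. Unlike the simultaneous translation $\tau_d\otimes\tau_d$, these half-steps do reach every indecomposable of $\mathcal{C}\otimes\mathcal{C}'$ from a projective. Well-definedness on $\mathcal{D}_{n,k}^{l-1}$ then holds because the two shifts cancel along a full $(\tau_{l-1}\otimes\tau_{l-1})$-orbit, so $\iota$ depends only on the orbit, and compatibility with irreducible morphisms is then read off the Hom-criterion of Theorem \ref{oppthom2}. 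Your description of irreducible morphisms of type $f\otimes\mathrm{id}$ and $\mathrm{id}\otimes g$ at the level of projectives is consistent with the $v$- and $h$-arrows of Proposition \ref{quiv-index}; if you replace the orbit-representative definition of $\iota$ with the propagation rule above, the remainder of your argument can be salvaged.
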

\begin{proof}
\jjm{For the projectives in $\mathcal{D}_{n,k}^{l-1}$, such a label is found by using Proposition \ref{quiv-index}.}
The remaining indecomposables of $\mathcal{D}_{n,k}^{l-1}$ are reachable by repeated application of $\tau_{l-1}^{-} \otimes 1$ and $1 \otimes\tau_{l-1}^{-}$. Following \cite[Proposition 3.13]{ot}, we define $\iota((\tau_{\jjm{l-1}}^{-}\otimes 1)M)=\iota(M)-1$ and $\iota((1\otimes \tau_{l-1}^{-})M)=\iota(M)+1$. This \jjm{is well defined}, since $\iota(M)=\iota(N)$ if and only if $M$ and $N$ are in the same $(\tau_{l-1}\otimes\tau_{l-1})$-orbit.
\jdm{Recall from Theorem \ref{oppthom2} that
$$\mathrm{Hom}_{A^d_n}(M_{i_0,\ldots,i_d},M_{j_0,\ldots,j_d})\ne 0\ \iff i_0-1< j_0< i_1-1<\cdots< i_d-1<j_d.$$} In particular, $\iota$ is automatically compatible with irreducible morphisms.
\end{proof}

As in Construction \ref{ARdefn}, the quiver of $A^{l-1}_{\nw{n-2l+1}}$ has a unique source \jjm{given by the vertex $\nnw{(1,3,\ldots,2l-3)}.$} \jjm{By Theorem \ref{oppthom2},} this determines a simple projective $(A^{l-1}_{\nw{n-2l+1}})^\mathrm{op}$-module \jjm{$M_{1,3,\ldots, \nnw{2l-1}}$.} \nnw{Let $J=(1,3,\dots, 2l-1)$.} Let $\mathcal{T} \subseteq \nonconsec{n}{l}$ be the slice labelling the \nnw{projectives} of $\nnw{(}A^{l-1}_{n-2l+1}\nnw{)^{\mathrm{op}}}$. It follows that there is a unique $I \in \mathcal{C}_{k}(\mathcal{T}$)  such that $\nw{\widehat{I}=J}$, given by $$\nw{I:=(1,3,\ldots,2l-1,2l,\ldots, l+k-1).}$$

\begin{proposition}\label{quiv-label}
Let $S_J$ be \nnw{the} simple projective $(A^{l-1}_{\nw{n-2l+1}})^\mathrm{op}$-module \nnw{contained in the $d$-cluster-tilting subcategory of $\mathrm{mod}(A_{n-2l+1}^{l-1})^{\mathrm{op}}$} \jjm{and let $I$ be the unique subset such that $\widehat{I}=J$.} Then \jjm{there is} a $(l-1)$-APR cotilting-tilting module $T$ such that \nw{the vertices of the algebra $\mathrm{End}_{(A^{l-1}_{k-l+1})^{\mathrm{op}}\otimes A^{l-1}_{\nw{n-k-l+1}}}(T)^\mathrm{op}$ are indexed by $\mathcal{C}_k(\mu_I^+(\mathcal{T}))$.} 
\end{proposition}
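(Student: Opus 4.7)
The plan is to exhibit explicit sinks $i$ of $A = A^{l-1}_{k-l+1}$ and $j$ of $B = A^{l-1}_{n-k-l+1}$ whose associated $(l-1)$-APR cotilting-tilting module $T$ mutates the indexing set exactly as claimed. First I would use Proposition \ref{quiv-index} to identify the vertex of $\Lambda = (A^{l-1}_{k-l+1})^{\mathrm{op}} \otimes A^{l-1}_{n-k-l+1}$ that carries the label $I$: under the bijection $\mathcal{C}_k(\mathcal{T}) \leftrightarrow \mathcal{S}$ used in the proof of Theorem \ref{maintheorem}, the unique element with $\widehat{I} = J$ corresponds to the pair $(q,r)$ in which all of the slack is concentrated in the last coordinate, namely $q = (0, \ldots, 0, k-l)$ and $r = (0, \ldots, 0, n-k-l)$. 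The arrow analysis via $v$ and $h$ described in that proof then shows that this pair selects a sink $i$ of $A$ and a sink $j$ of $B$ for which $e_iA$ is simple as an $A^{\mathrm{op}}$-module and $Be_j$ is the corresponding simple projective $B$-module.

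Next I would invoke Lemma \ref{onapr} to conclude that $T = (e_iA \otimes \tau_{l-1}^-(Be_j)) \oplus Q$ is a genuine $(l-1)$-tilting $\Gamma$-module, so that $\mathrm{End}_\Lambda(T)^{\mathrm{op}}$ is a well-defined basic algebra whose vertices are in bijection with the indecomposable summands of $T$. For every vertex $(v,w) \neq (i,j)$ the summand $e_vA \otimes Be_w$ is preserved and retains its label in $\mathcal{C}_k(\mathcal{T})$. At the distinguished vertex the summand is replaced by $e_iA \otimes \tau_{l-1}^-(Be_j)$, and applying the description of the precluster-tilting subcategory in Lemma \ref{cat-desc}, where $\tau_{l-1}^-$ on the $B$-factor implements the cyclic shift $\iota \mapsto \iota + 1$ on combinatorial labels, this new summand carries the label $I+1$.

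Finally I would appeal to Proposition \ref{prop-mut}: because $I$ is the unique element of $\mathcal{C}_k(\mathcal{T})$ with $\widehat{I} = J$, the slice mutation $\mu_J^+\mathcal{T}$ induces on $\mathcal{C}_k(\mathcal{T})$ the single replacement $I \mapsto I+1$, yielding $\mathcal{C}_k(\mu_I^+\mathcal{T}) = (\mathcal{C}_k(\mathcal{T}) \setminus \{I\}) \cup \{I+1\}$. Combined with the previous step this identifies the vertex set of $\mathrm{End}_\Lambda(T)^{\mathrm{op}}$ with $\mathcal{C}_k(\mu_I^+\mathcal{T})$, as required.

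The main obstacle is the middle step: showing that under the labelling $\iota$ of Lemma \ref{cat-desc}, the new summand $e_iA \otimes \tau_{l-1}^-(Be_j)$ acquires the label $I+1$ rather than some other translate in the same $(\tau_{l-1}\otimes \tau_{l-1})$-orbit. This amounts to tracking the orbit representatives of indecomposables inside $\mathcal{D}_{n,k}^{l-1}$ and checking that replacing the $B$-factor by its $\tau_{l-1}^-$-translate shifts the combined $l$-ple interval label by $+1$ cyclically, a computation which requires some care with the simple-projective conventions in $A^{\mathrm{op}} \otimes B$ and with the interplay between $\tau_{l-1}$ and the rotation built into $\iota$.
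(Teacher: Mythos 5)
Your proposal follows essentially the same route as the paper's proof: identify the vertex of $\Lambda$ carrying the label $I$ via the bijection from Proposition \ref{quiv-index}, recognise it as a sink of $A$ paired with a sink of $B$, form the $(l-1)$-APR cotilting-tilting module $T$, and check that the replaced summand acquires the label $\iota((1 \otimes \tau_{l-1}^-)P_I) = I+1$. In fact your coordinate $r = (0,\ldots,0,n-k-l)$ is the correct one (the paper's proof has a typo writing $n-k-l+1$, which violates the sum constraint in $\mathcal{S}$), and the ``obstacle'' you flag — verifying that the $\iota$-label of $e_iA \otimes \tau_{l-1}^-(Be_j)$ is $I+1$ and not some other $(\tau_{l-1}\otimes\tau_{l-1})$-translate — is precisely the step the paper asserts by appealing to the convention of Lemma \ref{cat-desc} without further justification, so you have not missed anything.
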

\begin{proof}
 Observe that \nw{$J$} corresponds to the element $\nw{(0,\ldots,0,n-2l+1)}\in\mathbb{Z}_{\geq 0}^{l}$ \jm{in the image of the map $\phi$}. Hence under the bijection in Proposition \ref{quiv-index}, $I$ labels the vertex $((0,\dots, 0, k-l),(0, \dots, 0, n-k-l+1))$ of $(A^{l-1}_{k-l+1})^{\mathrm{op}}\otimes A^{l-1}_{n-k-l+2}$. So $I$ corresponds to a source \jm{$i$} of $(A^{l-1}_{k-l+1})^{\mathrm{op}}$ and a sink \jm{$j$} of $A^{l-1}_{\nw{n-k-l+1}}$. Define the projective $((A^{l-1}_{k-l+1})^{\mathrm{op}}\otimes A^{l-1}_{\nw{n-k-l+1}})$-module $P_I=(\jm{e_{i}A^{l-1}_{k-l+1}}\otimes A^{l-1}_{\nw{n-k-l+1}}e_{j})$ \jm{and let $T$ be the $(l-1)$-APR cotilting-tilting module associated with $P_I$}. Then we have $\iota((1 \otimes \tau_{l-1}^{-})P_{I})=I+1\ \jjm{\in \mathcal{C}_k(\mu_I^+(\mathcal{T}))}$, so the result follows.
\end{proof}

\begin{example}
We illustrate this theorem with an example.
Let $l=3, n=\nw{8}, k=4$. \nw{The quiver of $A_{n-2l+1}^{l-1}=A_{3}^{2}$ is labelled as follows.}
\[
\begin{tikzcd}
\ && 15 \arrow{dr} && \\
\ & 14 \arrow{ur} \arrow{dr} && 25 \arrow{dr} & \\
13 \arrow{ur} && 24 \arrow{ur} && 35
\end{tikzcd}
\]
The projective $(A_{3}^{2})^\mathrm{op}$-module \jjm{$e_{13}(A_{3}^{2})$} is simple. \jjm{The indecomposable projective $A$-modules $$M_{135},M_{136},M_{137},M_{146},M_{147},M_{157}$$ determine a non-intertwining collection \nnw{$\mathcal{T}$} of six $3$-subsets of $[7]$:
$$135,136,136,146,147,157.$$} Let $\Lambda=(A_{2}^{2})^{\mathrm{op}} \otimes A_{2}^{2}$ and let $\Gamma=\Pi_{5}(\Lambda)$. The algebra $\Lambda$ has the following quiver, which is labelled by $\mathcal{C}_{4}(\mathcal{T})$ as in Proposition \jjm{\ref{quiv-index}}.
\[
\begin{tikzpicture}
\node(00) at (0,0){1246};
\node(20) at (4,0){1346};
\node(40) at (8,0){1356};
\node(02) at (0,2){1247};
\node(22) at (4,2){1347};
\node(42) at (8,2){1367};
\node(04) at (0,4){1257};
\node(24) at (4,4){1457};
\node(44) at (8,4){1467};

\draw (00) edge[->] (02);
\draw (02) edge[->] (04);
\draw (20) edge[->] (22);
\draw (22) edge[->] (24);
\draw (40) edge[->] (42);
\draw (42) edge[->] (44);

\draw (00) edge[->] (20);
\draw (20) edge[->] (40);
\draw (02) edge[->] (22);
\draw (22) edge[->] (42);
\draw (04) edge[->] (24);
\draw (24) edge[->] (44);
\end{tikzpicture}
\]
The algebra $\Gamma$ has the following quiver by \jjm{Lemma} \ref{cat-desc}. 
\[
\begin{tikzpicture}
\node(00) at (0,0){1246};
\node(20) at (4,0){1346};
\node(40) at (8,0){1356};
\node(02) at (0,2){1247};
\node(22) at (4,2){1347};
\node(42) at (8,2){1367};
\node(04) at (0,4){1257};
\node(24) at (4,4){1457};
\node(44) at (8,4){1467};

\draw (00) edge[->] (02);
\draw (02) edge[->] (04);
\draw (20) edge[->] (22);
\draw (22) edge[->] (24);
\draw (40) edge[->] (42);
\draw (42) edge[->] (44);

\draw (00) edge[->] (20);
\draw (20) edge[->] (40);
\draw (02) edge[->] (22);
\draw (22) edge[->] (42);
\draw (04) edge[->] (24);
\draw (24) edge[->] (44);

\draw[->] (44) to [bend left =25] (00);
\end{tikzpicture}
\]
As in Example \ref{ex-precl}, the 2-precluster-tilting subcategory of $\Gamma$ is shown below. It is now labelled according to \jjm{Lemma} \ref{cat-desc}. 
$$\begin{tikzpicture}[x=1.5cm,y=1.0cm,scale=1]
\draw [->] (2+4.,-6 .3+0.6+2) -- (2+4.,-5 .3+2);
\draw [->] (2+4.,-5 .3+0.6+2) -- (2+4.,-4 .3+2);
\draw [->] (2+4.,-4 .3+0.6+2) -- (2+4.,-3 .3+2);
\draw [->] (2+5.,-6.3+0.6+2) -- (2+5.,-5.3+2);
\draw [->] (2+5.,-5.3+0.6+2) -- (2+5.,-4.3+2);
\draw [->] (2+5.,-4.3+0.6+2) -- (2+5.,-3.3+2);
\draw [->] (2+6.,-6.3+0.6+2) -- (2+6.,-5.3+2);
\draw [->] (2+6.,-5.3+0.6+2) -- (2+6.,-4.3+2);
\draw [->] (2+6.,-4.3+0.6+2) -- (2+6.,-3.3+2);
\draw [->] (2+7.,-6.3+0.6+2) -- (2+7.,-5.3+2);
\draw [->] (2+7.,-5.3+0.6+2) -- (2+7.,-4.3+2);
\draw [->] (2+7.,-4.3+0.6+2) -- (2+7.,-3.3+2);
\draw [->] (2+7.,-3.3+0.6+2) -- (2+7.,-2.3+2);
\draw [->] (2+7.,-2.3+0.6+2) -- (2+7.,-1.3+2);
\draw [->] (2+8.,-3.3+0.6+2) -- (2+8.,-2.3+2);
\draw [->] (2+8.,-2.3+0.6+2) -- (2+8.,-1.3+2);
\draw [->] (2+9.,-3.3+0.6+2) -- (2+9.,-2.3+2);
\draw [->] (2+9.,-2.3+0.6+2) -- (2+9.,-1.3+2);
\draw [->] (2+10.,-3.3+0.6+2) -- (2+10.,-2.3+2);
\draw [->] (2+10.,-2.3+0.6+2) -- (2+10.,-1.3+2);

\draw [->] (2+4.3,-6.+2) -- (2+5.3-0.6,-6.+2);
\draw [->] (2+5.3,-6.+2) -- (2+6.3-0.6,-6.+2);
\draw [->] (2+6.3,-6.+2) -- (2+7.3-0.6,-6.+2);
\draw [->] (2+4.3,-5.+2) -- (2+5.3-0.6,-5.+2);
\draw [->] (2+5.3,-5.+2) -- (2+6.3-0.6,-5.+2);
\draw [->] (2+6.3,-5.+2) -- (2+7.3-0.6,-5.+2);
\draw [->] (2+4.3,-4.+2) -- (2+5.3-0.6,-4.+2);
\draw [->] (2+6.3,-4.+2) -- (2+7.3-0.6,-4.+2);
\draw [->] (2+5.3,-4.+2) -- (2+6.3-0.6,-4.+2);
\draw [->] (2+4.3,-3.+2) -- (2+5.3-0.6,-3.+2);
\draw [->] (2+5.3,-3.+2) -- (2+6.3-0.6,-3.+2);
\draw [->] (2+6.3,-3.+2) -- (2+7.3-0.6,-3.+2);
\draw [->] (2+7.3,-3.+2) -- (2+8.3-0.6,-3.+2);
\draw [->] (2+8.3,-3.+2) -- (2+9.3-0.6,-3.+2);
\draw [->] (2+9.3,-3.+2) -- (2+10.3-0.6,-3.+2);
\draw [->] (2+7.3,-2.+2) -- (2+8.3-0.6,-2.+2);
\draw [->] (2+8.3,-2.+2) -- (2+9.3-0.6,-2.+2);
\draw [->] (2+9.3,-2.+2) -- (2+10.3-0.6,-2.+2);
\draw [->] (2+7.3,-1.+2) -- (2+8.3-0.6,-1.+2);
\draw [->] (2+8.3,-1.+2) -- (2+9.3-0.6,-1.+2);
\draw [->] (2+9.3,-1.+2) -- (2+10.3-0.6,-1.+2);

\begin{scriptsize}{\tiny}

\node(2+A+2) at (2+4.,-6.+2) {$1246$};
\node(2+B+2) at (2+4.,-5.+2) {$1247$};
\node(2+C+2) at (2+4.,-4.+2) {$1257$};
\node(2+D+2) at (2+4.,-3.+2) {$2357$};
\node(2+E+2) at (2+5.,-6.+2) {$1346$};
\node(2+F+2) at (2+5.,-5.+2) {$1347$};
\node(2+G+2) at (2+5.,-4.+2) {$1457$};
\node(2+H+2) at (2+5.,-3.+2) {$2457$};
\node(2+I+2) at (2+6.,-6.+2) {$1356$};
\node(2+J+2) at (2+6.,-5.+2) {$1367$ };
\node(2+K+2) at (2+6.,-4.+2) {$1467$ };
\node(2+L+2) at (2+6.,-3.+2) {$2467$};
\node(2+m+2) at (2+7.,-6.+2) {$8135$};
\node(2+n+2) at (2+7.,-5.+2) {$8136$};
\node(2+o+2) at (2+7.,-4.+2) {$8146$};
\node(2+p+2) at (2+7.,-3.+2) {$1246$ };
\node(2+q+2) at (2+7.,-2.+2) {$1247$ };
\node(2+r+2) at (2+7.,-1.+2) {$1257$ };
\node(2+s+2) at (2+8.,-3.+2) {$1346$ };
\node(2+t+2) at (2+8.,-2.+2) {$1347$};
\node(2+u+2) at (2+8.,-1.+2) {$1457$ };
\node(2+v+2) at (2+9.,-3.+2) {$1356$};
\node(2+w+2) at (2+9.,-2.+2) {$1367$ };
\node(2+x+2) at (2+9.,-1.+2) {$1467$ };
\node(2+y+2) at (2+10.,-3.+2) {$8135$};
\node(2+aa+2) at (2+10.,-2.+2) {$8136$};
\node(2+Aa+2) at (2+10.,-1.+2) {$8146$};
\end{scriptsize}
\end{tikzpicture}$$

Then the unique $I \in \mathcal{C}_{4}(\mathcal{T})$ such that $\widehat{I}=135$ is $I=1356$. After mutating at this vertex, the quiver obtained is as follows.
\[
\begin{tikzpicture}
\node(00) at (0,0){1246};
\node(20) at (4,0){1346};
\node(40) at (8,0){2467};
\node(02) at (0,2){1247};
\node(22) at (4,2){1347};
\node(42) at (8,2){1367};
\node(04) at (0,4){1257};
\node(24) at (4,4){1457};
\node(44) at (8,4){1467};

\draw (00) edge[->] (02);
\draw (02) edge[->] (04);
\draw (20) edge[->] (22);
\draw (22) edge[->] (24);
\draw (44) edge[->, bend left] (40);
\draw (42) edge[->] (44);

\draw (00) edge[->] (20);
\draw (02) edge[->] (22);
\draw (22) edge[->] (42);
\draw (04) edge[->] (24);
\draw (24) edge[->] (44);
\draw (40) edge[->, bend left] (00);
\end{tikzpicture}
\]
\end{example}

\bibliographystyle{alpha}
\bibliography{extending}

\end{document}